\documentclass[12pt]{amsart}

\usepackage{hyperref}
\hypersetup{nesting=true,debug=true,naturalnames=true}
\usepackage{graphicx,amssymb,upref}

\usepackage{amsmath,amsthm}
\usepackage{amsfonts}
\usepackage{latexsym}
\usepackage{amsbsy}
\usepackage{amssymb,mathscinet}

\numberwithin{equation}{section}

\usepackage{amsfonts,amssymb,amscd,amsmath,enumerate,verbatim,calc,enumerate}
\theoremstyle{plain}
\newtheorem{theorem}{Theorem}[section]
\newtheorem{corollary}[theorem]{Corollary}
\newtheorem{lemma}[theorem]{Lemma}
\newtheorem{proposition}[theorem]{Proposition}

\newtheorem{notation}[theorem]{Notation}
\newtheorem{definition}[theorem]{Definition}

\newtheorem{remark}[theorem]{Remark}

\newcommand{\ot}{\otimes}

\newcommand{\wt}{\widetilde}
\newcommand{\wT}{\wt{T}}

\newcommand{\wV}{\wt{T}}

\begin{document}
	
\title{Wold-type decomposition and Beurling-Type Theorem for Covariant Representations}

	\date{\today}

	\author[Saini]{Dimple Saini}
	\address{Department of Applied Mathematics, Gautam Buddha University, Greater Noida, India}
	\email{dimple92.saini@gmail.com}
			\author[Rohilla]{Azad Rohilla \textsuperscript{*}}
	\address{Department of Mathematics, Sanskaram University, Jhajjar, India}
	\email{18pmt005@lnmiit.ac.in}

	\subjclass[2010]{46L08, 47A13, 47A15, 47B38, 47L30, 47L55, 47L80}
	\keywords{Invariant subspaces, Hilbert $C^*$-modules, Covariant representations, Wandering subspaces}
	
	\maketitle
	\begin{abstract}

Using operator inequalities, we study a Wold-type decomposition of covariant representations. Building on this decomposition, we prove a Beurling-type theorem showing that every nonzero invariant subspace is uniquely determined by its wandering subspace. Our results extend classical theorems of Beurling and subsequent developments for left-invertible operators to the setting of covariant representations of $C^*$-correspondences, providing a unified framework for invariant subspace theory under operator inequalities.
\end{abstract}

\section{Introduction}

Suppose that $\mathcal{H}$ is a Hilbert space and $B(\mathcal{H})$ denote the algebra of bounded linear operators on $\mathcal{H}$. Let $T\in B(\mathcal{H})$, we write $R(T)$ and $N(T)$ for the range space and kernel space of $T$, respectively. A closed subspace $\mathcal{K}\subseteq \mathcal{H}$ is called $T$-invariant if $T(\mathcal{K})\subseteq \mathcal{K}$. For a given subspace $\mathcal{K}$ of $\mathcal{H}$, we define $[\mathcal{K}]_T:=\bigvee_{n=0}^{\infty} T^n\mathcal{K},$
which represents the smallest closed $T$-invariant subspace of $\mathcal{H}$ that contains $\mathcal{K}$. The operator $T$ is said to be pure whenever $\bigcap_{n= 0}^{\infty}T^n\mathcal{H}=\{0\}.$ We say that $T$ has the wandering subspace property if $\mathcal{H}=[\mathcal{H}\ominus T\mathcal{H}]_T.$ Moreover, $T$ is called contractive if $\|Th\|\leq \|h\|$ for all $h\in \mathcal{H}$, and expansive if $\|Th\|\geq \|h\|$ for all $h\in \mathcal{H}$. In particular, every expansive operator is left invertible and admits a contractive left inverse.

Understanding the structure of operators through representations, classifications, and invariant subspaces is a central theme in operator theory on Hilbert spaces. A classical result in this direction is due to Wold \cite{Wold}, who showed that any isometry on a Hilbert space can be described either as a shift, as a unitary operator, or as a unique orthogonal sum of these two types. Another fundamental contribution is Beurling’s theorem \cite{Beurling}, which characterizes invariant subspaces of the Hardy space $H^{2}(\mathbb{D})$ over the unit disk $\mathbb{D}$: every closed $M_z$-invariant subspace is of the form $\theta H^{2}(\mathbb{D})$ for some inner function $\theta \in H^{\infty}(\mathbb{D})$. This result was later extended by Halmos \cite{Halmos} in the form of the wandering subspace theorem.

Richter \cite{R88} investigated a class of operators known as concave operators and established a corresponding wandering subspace theorem. An operator $T\in B(\mathcal{H})$ is called concave if it satisfies $\|T^2x\|^2+\|x\|^2\le 2\|Tx\|^2$ for every $x\in \mathcal{H}.$ Subsequently, Shimorin \cite{SS01} obtained a Wold-type decomposition for concave operators and presented a simpler proof of Richter’s result. Later, Olofsson \cite{AO05} introduced a growth condition for expansive operators and generalized Richter’s theorem as stated below.
\begin{theorem}
	Let $T\in B(\mathcal{H})$ and satisfying:
	\begin{itemize}
		\item[(i)] $T$ is pure and expansive.
		\item[(ii)] there exist positive constants $d$ and $d_m$ with $\sum_{m\geq 2}\frac{1}{d_m}=\infty$ and
		\begin{equation*}
			 d_{m}(\|Th\|^{2}-\|h\|^{2})+d\|h\|^{2}\ge \|T^{m}h\|^{2}  , \quad h \in \mathcal{H}.
		\end{equation*} 
	\end{itemize}
	Then $T$ possesses the wandering subspace property.
\end{theorem}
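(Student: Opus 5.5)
The plan is Shimorin's Cauchy-dual approach, with Olofsson's growth condition used to control a telescoping remainder. Since $T$ is expansive, $T^*T\ge I$ is invertible and $T$ has closed range. So I will set
\[
L:=(T^*T)^{-1}T^*,\qquad P:=I-TL .
\]
Then $LT=I$, the operator $TL$ is the orthogonal projection onto $T\mathcal{H}$, and $P$ is the orthogonal projection onto $E:=\mathcal{H}\ominus T\mathcal{H}$. Moreover, $L$ is a contraction, being the left inverse of an expansive operator (as noted in the introduction). Writing $T^kL^k-T^{k+1}L^{k+1}=T^kPL^k$ and telescoping gives, for every $n\ge 1$ and $x\in\mathcal{H}$,
\[
x-T^nL^nx=\sum_{k=0}^{n-1}T^kPL^kx\in[E]_T .
\]

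To prove $\mathcal{H}=[E]_T$, I take $x\perp[E]_T$ and aim to show $x=0$. The decomposition above gives $\|x\|^2=\langle T^nL^nx,x\rangle$ for every $n$. It therefore suffices to find a subsequence $n_j$ along which $T^{n_j}L^{n_j}x$ is bounded and converges weakly to $0$.

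\textbf{Boundedness (the main step).} Apply hypothesis (ii) with $m=n\ge 2$ and $h=L^nx$:
\[
\|T^nL^nx\|^2\le d_n\big(\|TL^nx\|^2-\|L^nx\|^2\big)+d\|L^nx\|^2 .
\]
Since $TL$ is an orthogonal projection, $\|TL^nx\|=\|TL(L^{n-1}x)\|\le\|L^{n-1}x\|$. Setting $a_n:=\|L^{n-1}x\|^2-\|L^nx\|^2$, we have $a_n\ge 0$ because $L$ is contractive, and $\sum_n a_n\le\|x\|^2$ by telescoping. Hence
\[
\|T^nL^nx\|^2\le d_na_n+d\|x\|^2 .
\]
Now suppose $\liminf_n d_na_n>0$. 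Then $a_n\ge c/d_n$ for some $c>0$ and all large $n$, so $\sum_n a_n=\infty$ because $\sum_n 1/d_n=\infty$, a contradiction. Therefore some subsequence satisfies $d_{n_j}a_{n_j}\to0$, and along it $T^{n_j}L^{n_j}x$ is bounded. This is the step where the growth condition is essential, and pairing it with the telescoping sum $\sum_n a_n$ is the key idea.

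\textbf{Weak limit is zero.} Pass to a further subsequence so that $T^{n_j}L^{n_j}x\to z$ weakly. For each fixed $m$, the vectors $T^{n_j}L^{n_j}x$ lie in $T^m\mathcal{H}$ once $n_j\ge m$. Each $T^m\mathcal{H}$ is closed, since $T^m$ is expansive, and hence weakly closed. So $z\in\bigcap_m T^m\mathcal{H}=\{0\}$ by purity. Consequently
\[
\|x\|^2=\lim_j\langle T^{n_j}L^{n_j}x,x\rangle=\langle z,x\rangle=0,
\]
so $x=0$ and $\mathcal{H}=[\mathcal{H}\ominus T\mathcal{H}]_T$.
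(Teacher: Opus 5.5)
Your proof is correct and complete. The paper does not prove this statement: it quotes it in the introduction as background from Olofsson \cite{AO05}, so there is no in-paper argument to compare it with step by step.

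What you give is the Shimorin-type Cauchy-dual argument adapted to Olofsson's growth condition. Your $L=(T^*T)^{-1}T^*$ is exactly $T^{\dagger}=T'^{*}$ in the paper's Moore--Penrose and Cauchy-dual notation. The identity $x-T^nL^nx=\sum_{k<n}T^kPL^kx$ reduces the problem to finding a bounded subsequence of $(T^nL^nx)$, after which purity and weak compactness finish the argument. The hypothesis $\sum_{m\ge2}1/d_m=\infty$ enters only through the $\liminf$ argument against the summable sequence $a_n=\|L^{n-1}x\|^2-\|L^nx\|^2$. This is why divergence of $\sum 1/d_m$, rather than boundedness of $d_m$, is enough. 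Concave operators are the case $d_m=m$, $d=1$, which recovers Richter's theorem.

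The nearest argument actually carried out in the paper, the proof of Theorem \ref{DT2}, runs differently. It assumes contractivity and the decay condition $\|\widetilde{T}_n^{*}h\|\to0$ in place of expansivity plus purity. It then shows that $c_n=\|\widetilde{T}_n^{*}h\|^2-\|\widetilde{T}_{n+1}^{*}h\|^2$ is nondecreasing, via a second-difference inequality derived from condition (1), and tends to $0$. So no left inverse or weak-compactness step is needed there.

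One small wording point. The bound $\|L\|\le1$ does not follow merely from $L$ being \emph{a} left inverse of an expansive operator. When $T\mathcal{H}\ne\mathcal{H}$, adding to $L$ any operator that vanishes on $T\mathcal{H}$ gives another left inverse, of arbitrarily large norm. For your particular $L$ the bound holds because $LL^*=(T^*T)^{-1}\le I$; equivalently, $L$ annihilates $(T\mathcal{H})^{\perp}$ and inverts $T$ on $T\mathcal{H}$.
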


Sun and Zheng \cite{SZ09} later revisited Beurling-type results by deriving new identities in the Bergman space. Building on these ideas, Izuchi, Izuchi, and Izuchi \cite{Izuchi1} established the following theorem.
\begin{theorem}
	Let $T\in B(\mathcal{H})$ and satisfying:
\begin{enumerate}
	\item $\|Th\|^2+\|T^{*2}Th\|^2\le 2\|T^*Th\|^2$ for $h\in \mathcal{H}.$
	\item $T$ is bounded below.
	\item $T$ is contractive.
	\item $\|T^{*n}h\|\to 0$ as $n\to \infty$ for $h\in \mathcal{H}.$
\end{enumerate}
	Then $T$ has the wandering subspace property.
\end{theorem}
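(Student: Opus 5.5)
The plan is the Shimorin-type route: express every vector through the left inverse of $T$, and show that the remainder term tends to $0$ weakly. Since $T$ is bounded below by (2), $T^*T$ is invertible. I would introduce the Cauchy dual $T'=T(T^*T)^{-1}$ and the left inverse $L=T'^{*}=(T^*T)^{-1}T^*$. Then $LT=I$, and $TL$ is the orthogonal projection onto $R(T)$ (which is closed). Hence $I-TL=P_E$ is the orthogonal projection onto $E:=\mathcal{H}\ominus T\mathcal{H}=N(T^*)$. Iterating $x=TLx+P_Ex$ gives, for every $n\ge 1$,
\begin{equation*}
x=\sum_{k=0}^{n-1}T^kP_EL^kx+T^nL^nx .
\end{equation*}
The finite sum lies in $[E]_T$. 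So it suffices to show that $T^nL^nx\to 0$ weakly for every $x$. Then $x$ is a weak limit of elements of the closed subspace $[E]_T$, hence lies in it, and $\mathcal{H}=[E]_T$.

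For the weak convergence, fix $y\in\mathcal{H}$ and write $\langle T^nL^nx,y\rangle=\langle L^nx,T^{*n}y\rangle$. By hypothesis (4), $\|T^{*n}y\|\to 0$. Therefore it is enough to prove that $\sup_n\|L^nx\|<\infty$ for each $x$; by the uniform boundedness principle this is the same as $\sup_n\|L^n\|<\infty$. Note that contractivity (3) of $T$ is not enough for this: it gives only $\|T^nL^nx\|\le\|L^nx\|$, not a bound on $\|L^nx\|$ itself. So everything reduces to a growth estimate for the powers of $L$.

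To get that estimate I would rewrite hypothesis (1) as an operator inequality. Substituting $h=(T^*T)^{-1}y$ gives $Th=T'y$, $T^*Th=y$ and $T^{*2}Th=T^*y$. So (1) becomes
\begin{equation*}
\|T'y\|^2+\|T^*y\|^2\le 2\|y\|^2,\qquad\text{i.e.}\qquad (T^*T)^{-1}+TT^*\le 2I ,
\end{equation*}
using $T'^*T'=(T^*T)^{-1}=LL^*$. This is a concavity-type inequality for the pair $(T',T^*)$, in the spirit of Richter's and Shimorin's conditions. I expect that, combined with the contractivity $T^*T\le I$ (equivalently $LL^*\ge I$), it forces $L$ to be power bounded. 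My first attempt would be to test the inequality on the vectors $y=L^{*k}z$ and $y=T^kz$ and then telescope. The aim is to show that the sequence $a_n=\|L^{*n}z\|^2$ (or $\|T'^nz\|^2$) has monotone second differences, just as in the proof of Richter's theorem for concave operators. Together with the expansivity of $T'$, which follows from $T^*T\le I$, this should give $\|L^{*n}\|$, and hence $\|L^n\|$, bounded in $n$. If only linear growth $\|L^nx\|^2\le C(n+1)\|x\|^2$ comes out, I would combine it with a quantitative use of (4). The fallback is to pass to Ces\`aro means of the decomposition above, replacing $T^nL^nx$ by the averages of $T^kL^kx$, which still converge weakly to $0$ under that weaker bound.

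The main obstacle is exactly this step: extracting power-boundedness, or a sufficiently slow growth, of $L$ from $(T^*T)^{-1}+TT^*\le 2I$ and $T^*T\le I$. The remaining steps (the decomposition identity, the weak-limit argument, and the use of (4)) are routine once that estimate is in hand.
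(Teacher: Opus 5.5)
Your decomposition $x=\sum_{k<n}T^kP_EL^kx+T^nL^nx$ and the weak-closure argument are fine. The gap is at exactly the step you flag, and the estimate you hope for there is false. The Bergman shift $Te_m=\sqrt{\tfrac{m+1}{m+2}}\,e_{m+1}$ satisfies all four hypotheses: $(T^*T)^{-1}+TT^*=2I$, $\|T\|\le 1$, $\|Th\|\ge\|h\|/\sqrt{2}$, and $T^{*n}\to 0$ strongly. Yet $Le_{m+1}=\sqrt{\tfrac{m+2}{m+1}}\,e_m$, so $L^ne_n=\sqrt{n+1}\,e_0$ and $\sup_n\|L^n\|=\infty$. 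Testing your inequality at $y=L^{*k}z=T'^kz$ only gives concavity of $k\mapsto\|T'^kz\|^2$, i.e.\ $\|L^n\|^2=\|T'^n\|^2\le 1+n(\|T'\|^2-1)$, and the example shows this linear growth cannot be improved. The fallbacks do not close the gap, because (4) carries no rate. The bound $|\langle L^nx,T^{*n}y\rangle|\le C\sqrt{n+1}\,\|x\|\,\|T^{*n}y\|$ need not tend to $0$: already for the Bergman shift, $y=\sum_m(m+1)^{-1/2-\epsilon}e_m$ with $0<\epsilon<1/2$ has $\|T^{*n}y\|\asymp n^{-\epsilon}$. The Ces\`aro averages are only bounded by $\frac{C\|x\|}{N}\sum_{n\le N}\sqrt{n+1}\,\|T^{*n}y\|$, which fails for the same reason. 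Rescuing the decomposition route would require controlling $T^nL^nx$ directly, not through $\|L^nx\|$ and $\|T^{*n}y\|$ separately, and the proposal offers no such estimate.

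The missing idea, which is how the paper argues (Theorem~\ref{DT2} is the covariant version and its proof specializes directly), is to work on $\mathcal{H}\ominus[E]_T$ instead of expanding an arbitrary $x$. If $h\perp[E]_T$, then $T^{*n}h\perp E$, i.e.\ $T^{*n}h\in R(T)$ for all $n$. Hence $T'T^{*(n+1)}h=P_{R(T)}T^{*n}h=T^{*n}h$. Test your reformulated inequality $\|T'y\|^2+\|T^*y\|^2\le 2\|y\|^2$ at $y=T^{*(n+1)}h$ (equivalently, apply (1) at $\eta$ with $T\eta=T^{*n}h$). This yields
\[
\|T^{*n}h\|^2-2\|T^{*(n+1)}h\|^2+\|T^{*(n+2)}h\|^2\le 0 ,
\]
so $a_n=\|T^{*n}h\|^2$ is concave. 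Contractivity gives $c_n=a_n-a_{n+1}\ge 0$, concavity gives $c_n\le c_{n+1}$, and (4) gives $c_n\to 0$. Hence every $c_n=0$, so $\|h\|=\|T^{*n}h\|\to 0$ and $h=0$. This uses (4) only qualitatively and needs no growth bound on $L^n$. In fact a nonnegative concave sequence is automatically nondecreasing, so (3) is not even needed for this step.
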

Within the framework of noncommutative multivariable operator theory, Cuntz \cite{C77} introduced a $C^*$-algebra generated by a row isometry, that is, a tuple $(T_1,\dots,T_n)$ of isometries on a Hilbert space $\mathcal{H}$ whose ranges are mutually orthogonal, equivalently satisfying $\sum_{i=1}^{n}T_iT_i^*=I_{\mathcal{H}}$. Later, Frazho \cite{F84} proved a Wold decomposition for two such isometries, and Popescu \cite{POP89} extended this result to infinite families of isometries with orthogonal ranges. Pimsner \cite{Pimsner} further broadened this framework by constructing Cuntz–Krieger type algebras via isometric covariant representations of $C^*$-correspondences. Subsequently, Muhly and Solel \cite{MS99} obtained an analogue of Popescu’s Wold decomposition for row isometries in the context of isometric covariant representations, using the Fock module approach.

Operator theory and function theory are closely intertwined fields that have developed through numerous interactions with other areas of mathematics. While classical questions remain influential, many recent advances involve extending foundational results to broader and more abstract settings. One such direction is the study of Hilbert $C^*$-modules through $C^*$-correspondences (over a $C^*$-algebra $\mathcal{A}$) and their covariant representations. In the special case where $\mathcal{A}=\mathbb{C}$ and the correspondence is $\mathbb{C}^{n}$, the concept of an isometric covariant representation reduces to that of a row isometry. Consequently, working in this more general framework requires the development of new methods and techniques.

Covariant representations of $C^*$-correspondences appear naturally in the study of operator algebras, noncommutative dynamical systems, and dilation theory. They provide a flexible framework for analyzing operator tuples and non-selfadjoint operator algebras associated with correspondences. The main objective of this paper is to extend Beurling-type invariant subspace results to the setting of covariant representations by employing a structural method based on operator inequalities. Using these inequalities, we establish a theory of wandering subspaces for covariant representations.

The results obtained here generalize several classical theorems concerning weighted shifts and operators that are close to isometries to the broader context of covariant representations associated with $C^*$-correspondences. In addition, the approach presented in this work offers a unified perspective for investigating invariant subspaces in noncommutative environments where traditional techniques are not directly applicable. This viewpoint highlights the importance of operator inequalities as a substitute for strict isometric assumptions and suggests further directions in multivariable and non-selfadjoint operator theory.

\section{Preliminaries and Notations}

In this section, we recall basic notions concerning $C^*$-correspondences and their covariant representations that will be used throughout the paper. Standard references include \cite{MS98,MS99,DS1}. 

\subsection{$C^*$-Correspondences}

Suppose that $\mathcal{A}$ is a $C^*$-algebra. A \emph{$C^*$-correspondence} over $\mathcal{A}$ is a right Hilbert $\mathcal{A}$-module $E$ equipped with a nondegenerate $*$-homomorphism
\[
\varphi : \mathcal{A} \to \mathcal{L}(E),
\]
where $\mathcal{L}(E)$ is the $C^*$-algebra of adjointable operators on $E$. The left module action of $\mathcal{A}$ on $E$ is given by $\varphi(a)\xi$, often written simply as $a \cdot \xi$.

\subsection{Covariant Representations}

Suppose that $E$ is a $C^*$-correspondence over $\mathcal{A}$. A \emph{covariant representation} of $E$ on a Hilbert space $\mathcal{H}$ consists of a $*$-representation
\[
\sigma : \mathcal{A} \to B(\mathcal{H})
\]
and a linear map
\[
T : E \to B(\mathcal{H})
\]
satisfying the bimodule property
\[
T(c \cdot x \cdot d) = \sigma(c)\, T(x)\, \sigma(d)
\quad
\text{for all } c,d \in \mathcal{A}, x \in E.
\]
Then we say that $(\sigma,T)$ is a {\it covariant representation}.  If $T$ is completely bounded (resp. completely contractive), then the pair $(\sigma,T)$ is called {\rm completely bounded covariant representation} (resp. completely contractive covariant representation). For simplicity we used {\rm c.b.c-representation} instead of completely bounded covariant representation.

The following result is due to Muhly and Solel \cite{MS98} which is useful to classify the c.b.c. representations of a $C^*$-correspondence:

\begin{lemma} \label{MSC}
	The mapping $(\sigma, T)\mapsto \widetilde{T}$ establishes a one-to-one correspondence between all c.b.c. (respectively, completely contractive covariant) representations $(\sigma, T)$ of $E$ on $\mathcal H$ and all bounded (respectively, contractive) linear operators $\widetilde{T}:E\otimes_{\sigma} \mathcal H\to \mathcal
	H$ defined by
	\[
	\widetilde{T}(x\otimes h)=T(x)h \quad \quad (
	h\in\mathcal H,x\in E),
	\] with $\widetilde{T}(\phi(c)\otimes I_{\mathcal
		H})=\sigma(c)\widetilde{T}$ for every $c\in\mathcal A$. Furthermore, $\widetilde{T}$ is an isometry if and only if the representation $(\sigma, T)$ is isometric.
\end{lemma}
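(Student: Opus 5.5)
The plan is to build $\widetilde{T}$ directly from the algebraic tensor product and then check each part of the correspondence. Recall that $E\otimes_\sigma\mathcal H$ is the completion of the algebraic tensor product $E\odot\mathcal H$ with respect to the semi-inner product
\[
\langle x\otimes h, y\otimes k\rangle=\langle h,\sigma(\langle x,y\rangle)k\rangle,
\]
after quotienting by null vectors.

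\textbf{From $(\sigma,T)$ to $\widetilde T$.} Given a c.b.c. representation $(\sigma,T)$, define $\widetilde T$ on finite sums by $\sum x_i\otimes h_i\mapsto\sum T(x_i)h_i$. The key estimate is
\[
\Big\|\sum_{i=1}^n T(x_i)h_i\Big\|\le \|T\|_{cb}\,\Big\|\sum_{i=1}^n x_i\otimes h_i\Big\|.
\]
To obtain it, write $\sum T(x_i)h_i$ as the row $[T(x_1)\ \cdots\ T(x_n)]$ applied to the column $(h_i)$. Then use the bimodule property to compare with the Gram matrix $(\sigma(\langle x_i,x_j\rangle))$, following Muhly--Solel. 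In more detail:
\begin{itemize}
\item Factor the row $(x_1,\dots,x_n)\in M_{1,n}(E)$ as $y\cdot a$, where $a=(\langle x_i,x_j\rangle)^{1/2}\in M_n(\mathcal A)$ and $y\in M_{1,n}(E)$ has norm at most $1$. This uses the approximate-unit factorization of Hilbert module rows (Cohen--Hewitt, or Lance's lemma).
\item Then $T_n(x)=T_n(y)\,\sigma_n(a)$, by the bimodule property applied entrywise.
\item Hence $\|T_n(x)h\|\le\|T\|_{cb}\,\|\sigma_n(a)h\|$, and $\|\sigma_n(a)h\|^2=\langle h,\sigma_n(\langle x,x\rangle)h\rangle=\|\sum x_i\otimes h_i\|^2$.
\end{itemize}
This shows that $\widetilde T$ is well defined (null vectors go to $0$) and bounded, with $\|\widetilde T\|\le\|T\|_{cb}$. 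Covariance is immediate: $\widetilde T(\varphi(c)x\otimes h)=T(cx)h=\sigma(c)T(x)h$.

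\textbf{From $\widetilde T$ back to $(\sigma,T)$.} Conversely, given a bounded $\widetilde T$ with $\widetilde T(\varphi(c)\otimes I)=\sigma(c)\widetilde T$, set $T(x)h=\widetilde T(x\otimes h)$. Linearity is clear, and left covariance comes from the hypothesis. Right covariance comes from the balancing $xd\otimes h=x\otimes\sigma(d)h$ in the interior tensor product. For complete boundedness, note that
\[
T_n\big((x_{ij})\big)=\big(I_n\otimes\widetilde T\big)\circ L_{(x_{ij})},
\]
where $L_x:\mathcal H^n\to(E\otimes_\sigma\mathcal H)^n$ is $h\mapsto(\sum_j x_{ij}\otimes h_j)_i$. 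Each $L_x$ has norm at most $\|(x_{ij})\|_{M_n(E)}$, so $\|T\|_{cb}\le\|\widetilde T\|$. Thus the norms agree, and the contractive case matches the completely contractive case. The two constructions are visibly mutually inverse on elementary tensors, which gives the bijection.

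\textbf{Isometric case.} Here $(\sigma,T)$ is isometric if $T(x)^*T(y)=\sigma(\langle x,y\rangle)$. This identity is equivalent to $\langle\widetilde T(x\otimes h),\widetilde T(y\otimes k)\rangle=\langle x\otimes h,y\otimes k\rangle$ on elementary tensors. By linearity and density, that is in turn equivalent to $\widetilde T^*\widetilde T=I$.

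The main obstacle is the norm inequality in the first direction, namely the factorization step that lets complete boundedness of $T$ control $\widetilde T$ on sums of tensors.
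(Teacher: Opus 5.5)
The paper gives no proof of this lemma; it quotes it from Muhly--Solel, so your argument has to stand on its own. Most of it does. The converse direction is correct: your bound $\|L_X\|\le\|X\|_{M_n(E)}$ gives $\|T\|_{cb}\le\|\widetilde T\|$. The covariance checks, the bijection and the isometric case are also fine. The gap is exactly the step you flagged. The factorization $x=y\cdot\langle x,x\rangle^{1/2}$ with $\|y\|\le 1$ is a polar decomposition, and it does not exist in general in Hilbert $C^*$-modules. For a counterexample, take $\mathcal A=E=C[0,1]$ and $x(t)=te^{i/t}$, $x(0)=0$. Then $\langle x,x\rangle^{1/2}(t)=t$, and $x=y\cdot t$ forces $y(t)=e^{i/t}$ for $t>0$, which has no continuous extension to $0$. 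Neither tool you cite rescues this. Cohen--Hewitt gives $x=y\cdot a$ with no relation between $a$ and $\langle x,x\rangle^{1/2}$. The factorization in Lance's book gives $x=w\langle x,x\rangle^{\alpha}$ only for $0<\alpha<1/2$.

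The repair is an approximation. Put $b=\langle x,x\rangle\in M_n(\mathcal A)_+$ for your row $x$, and for $\varepsilon>0$ set $y_\varepsilon=x\cdot(b+\varepsilon)^{-1/2}$. Compute this over the unitization of $\mathcal A$: it still acts on $E$ on the right, $\sigma$ extends unitally, and the bimodule identity persists. Then $\langle y_\varepsilon,y_\varepsilon\rangle=(b+\varepsilon)^{-1/2}b(b+\varepsilon)^{-1/2}\le 1$ and $x=y_\varepsilon\cdot(b+\varepsilon)^{1/2}$ exactly. Hence
\[
\Bigl\|\sum_{i=1}^n T(x_i)h_i\Bigr\|^2\le\|T\|_{cb}^2\bigl(\langle h,\sigma_n(b)h\rangle+\varepsilon\|h\|^2\bigr),
\]
and letting $\varepsilon\to 0$ gives your key estimate. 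You can also avoid unitizing. Use $x_\varepsilon=x\cdot b(b+\varepsilon)^{-1}=y'_\varepsilon\cdot b^{1/2}$ with $y'_\varepsilon=x\cdot b^{1/2}(b+\varepsilon)^{-1}$, which has norm at most $1$, and note $\|x-x_\varepsilon\|\le\sqrt{\varepsilon}/2$. With this change your proof is complete.
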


\begin{definition}	
	Let $(\sigma,T)$  be a  c.b.c-representation of ${E}$ on $\mathcal{H}.$ A closed  subspace $\mathcal{K}$ of $\mathcal{H}$ is called  $(\sigma,T)$-{\rm invariant} $(resp. (\sigma,T)$-{\rm reducing}) (cf. \cite{HS19}) if it is   $\sigma(\mathcal{A})$-invariant and (resp. both $\mathcal{K},\mathcal{K}^{\perp}$) is invariant by each operator  $T(\xi)$ for all $\xi \in E.$ The restriction $(\sigma , T)|_{\mathcal{K}}$ gives a new representation of $E$ on $\mathcal{K}.$
\end{definition}
For each $m\in \mathbb{N}_0(=\mathbb{N}\cup \{0\})$,
$E^{\otimes m} =E\otimes_{\phi} \dots \otimes_{\phi}E$ ($m$-times) (here $E^{\otimes 0} =\mathcal{A}$) is a $C^*$-correspondence over $\mathcal{A}$, with the left module action  of $\mathcal{A}$ on $E^{\otimes m}$ defined as  $$\phi_m(c)(x_1 \otimes \dots \otimes x_n)=cx_1\otimes \dots \otimes x_m, \: \:\:\: c\in \mathcal{A},x_i \in E.$$
For  $m\in \mathbb{N},$ define $\wV_m : E^{\ot m}\ot \mathcal{H} \to \mathcal{H}$ by $$\wV_m (x_1 \ot \dots \ot x_m \ot h) = V (x_1) \dots V(x_m) h, \quad x_i \in E, h \in \mathcal H.$$ 
The \emph{Fock module} of $E$, $\mathcal{F}(E)= \bigoplus_{m \geq 0}E^{\otimes m},$ is a $C^*$-correspondence over $\mathcal{A},$ where the left module action  of $\mathcal{A}$ on $\mathcal{F}(E)$ is defined  by $$\phi_{\infty}(c)\left(\oplus_{m \geq 0}x_m\right)=\oplus_{m \geq 0}\phi_m(c)x_m , \:\: x_m \in E^{\otimes m},c\in \mathcal{A}.$$
For $\xi \in E,$  the \emph{creation operator} $V_{\xi}$ on $\mathcal{F}(E)$ is defined by $$V_{\xi}(x)=\xi \otimes x, \:\: x \in E^{\otimes m}.$$ 

Assume $(\sigma,T)$ to be a c.b.c-representation of $E$ on $\mathcal H$ with closed range. The {\it Moore-Penrose inverse} $\wT^{\dagger}$ of $(\sigma,T)$ is defined by \begin{equation}\label{DAS3}
	\wV^{\dagger}:=\wV_0^{-1}P_{R(\wV)},
\end{equation} where $\wV_0=\wV|_{N(\wV)^{\perp}}: {N(\wV)^{\perp}} \to R(\wV)$ and $P_{R(\wV)}$ denotes the orthogonal projection of $\mathcal{H}$ onto ${R(\wV)}.$ Equivalently, the  Moore-Penrose inverse of $(\sigma,T)$ can be characterized as the unique operator satisfying the following four equations: \begin{equation*}
	\widetilde{T}\widetilde{T}^{\dagger}\widetilde{T}=\widetilde{T},\quad \widetilde{T}^{\dagger}\widetilde{T}\widetilde{T}^{\dagger}=\widetilde{T}^{\dagger},\quad (\widetilde{T}\widetilde{T}^{\dagger})^*=\widetilde{T}\widetilde{T}^{\dagger},\quad( \widetilde{T}^{\dagger}\widetilde{T})^*=\widetilde{T}^{\dagger}\widetilde{T}.
\end{equation*}

\begin{notation}
	For $m\in \mathbb{N},$ define $\widetilde{T}^{\dagger (m)}:\mathcal{H}\to E^{\ot m}\ot \mathcal{H}$ by $$\widetilde{T}^{\dagger(m)}:= (I_{E^{\otimes m-1}}\ot \widetilde{T}^{\dagger})(I_{E^{\otimes m-2}}\ot\widetilde{T}^{\dagger})\dots(I_{E}\ot \widetilde{T}^{\dagger})\widetilde{T}^{\dagger}.$$
\end{notation}

\begin{proposition}\cite{AHS22}
	Assume $(\sigma,T)$ to be a c.b.c. representation of $E$ on $\mathcal H$ with closed range. Then 
	\begin{enumerate}
		\item  $ R({\widetilde{T}}^\dagger) =R({\widetilde{T}}^*)=N({\widetilde{T})}^{\perp}$ and $R(\widetilde{T})=R(\widetilde{T}\widetilde{T}^{\dagger})= R({\widetilde{T}}^{\dagger*}).$ 
		\item$ N(\widetilde{T}^{\dagger})=N(\widetilde{T}\widetilde{T}^{\dagger})=N({\widetilde{T}}^*)$ and $N({\widetilde{T}})=N(\widetilde{T}^\dagger\widetilde{T})=N({\widetilde{T}}^{\dagger*}).$
			\item $\widetilde{T}^*\widetilde{T}\widetilde{T}^{\dagger}=\widetilde{T}^\dagger\widetilde{T}\widetilde{T}^*=\widetilde{T}^*,$ $\widetilde{T}\widetilde{T}^{\dagger}=P_{R({\widetilde{T}})}$ and $\widetilde{T}^{\dagger}\widetilde{T}=P_{R({\widetilde{T}^*})}.$
		
		\item  $({\widetilde{T}}^\dagger)^\dagger=\widetilde{T}$ and $({\widetilde{T}}^*)^\dagger=({\widetilde{T}}^\dagger)^*.$
	\end{enumerate}
\end{proposition}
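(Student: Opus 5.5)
This proposition concerns only the bounded operator $\wT : E\ot_\sigma \mathcal H \to \mathcal H$ with closed range. It is therefore a statement about the Moore–Penrose inverse of a closed-range operator between two Hilbert spaces. I would work entirely from the definition \eqref{DAS3}, $\wT^\dagger=\wT_0^{-1}P_{R(\wT)}$, together with the four Penrose equations. First I would note that $\wT_0=\wT|_{N(\wT)^\perp}:N(\wT)^\perp\to R(\wT)$ is a bijection between Hilbert spaces, since $R(\wT)$ is closed. By the open mapping theorem $\wT_0^{-1}$ is bounded, so $\wT^\dagger$ is a bounded operator $\mathcal H\to E\ot_\sigma\mathcal H$. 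From the definition one reads off directly that $\wT\wT^\dagger=P_{R(\wT)}$ and $\wT^\dagger\wT=P_{N(\wT)^\perp}$. Both are self-adjoint idempotents, so the four Penrose equations hold, which gives (3) apart from its first identity.

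Next, (1) and (2). The range of $\wT^\dagger$ is $\wT_0^{-1}(R(\wT))=N(\wT)^\perp$, and this equals $\overline{R(\wT^*)}$. Because $R(\wT)$ is closed, the closed range theorem says $R(\wT^*)$ is closed, so $R(\wT^\dagger)=R(\wT^*)=N(\wT)^\perp$. Since $\wT_0^{-1}$ is injective, $N(\wT^\dagger)=N(P_{R(\wT)})=R(\wT)^\perp=N(\wT^*)$. The statements about $\wT\wT^\dagger$ and $\wT^\dagger\wT$ follow from their identification with the projections $P_{R(\wT)}$ and $P_{N(\wT)^\perp}$. For the adjoint $\wT^{\dagger*}$ I would use $R(S^*)^\perp=N(S)$ and $N(S^*)=R(S)^\perp$ with $S=\wT^\dagger$:
\begin{align*}
N(\wT^{\dagger*}) &= R(\wT^\dagger)^\perp = N(\wT),\\
\overline{R(\wT^{\dagger*})} &= N(\wT^\dagger)^\perp = R(\wT).
\end{align*}
To remove the closure I need $R(\wT^\dagger)$ to be closed; it is, since it equals $N(\wT)^\perp$. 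The closed range theorem then makes $R(\wT^{\dagger*})$ closed as well.

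For the remaining identity in (3), I would take adjoints in the Penrose equations. Using $(\wT\wT^\dagger)^*=\wT\wT^\dagger$,
\[
\wT^*=(\wT\wT^\dagger\wT)^*=\wT^*(\wT\wT^\dagger)^*=\wT^*\wT\wT^\dagger .
\]
Symmetrically, using $(\wT^\dagger\wT)^*=\wT^\dagger\wT$,
\[
\wT^*=(\wT^\dagger\wT)^*\wT^*=\wT^\dagger\wT\wT^* .
\]

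For (4) I would use the uniqueness of the operator satisfying the four Penrose equations; this characterization is asserted just before the proposition, and it also follows from a short standard manipulation. The Penrose equations are symmetric under swapping $\wT$ and $\wT^\dagger$, so $\wT$ satisfies them as a generalized inverse of $\wT^\dagger$, and hence $(\wT^\dagger)^\dagger=\wT$. Here $\wT^\dagger$ does have closed range, by (1). Taking adjoints of all four equations shows that $(\wT^\dagger)^*$ satisfies the Penrose equations for $\wT^*$, which is closed-range, so $(\wT^*)^\dagger=(\wT^\dagger)^*$.

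The only genuinely non-formal point is closedness of ranges: that $R(\wT^*)$ and $R(\wT^{\dagger*})$ are closed, so that the range equalities hold without closures. I would handle this uniformly with the closed range theorem. The covariance relation $\wT(\phi(c)\ot I)=\sigma(c)\wT$ plays no role here, since everything is Hilbert-space operator theory.
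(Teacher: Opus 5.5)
Your proof is correct. The paper does not prove this proposition itself: it quotes it from the literature (reference AHS22) without argument, so there is no paper proof to compare routes with, and your write-up is the standard self-contained argument.

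You are right that the covariance relation plays no role. Everything is Moore--Penrose theory for the single closed-range operator $\wT:E\ot_\sigma\mathcal H\to\mathcal H$.

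The non-formal inputs are all handled properly:
\begin{itemize}
\item boundedness of $\wT_0^{-1}$, via the open mapping theorem;
\item closedness of $R(\wT^*)$, via the closed range theorem;
\item closedness of $R(\wT^{\dagger*})$, via the closed range theorem applied to $\wT^\dagger$, once you know $R(\wT^\dagger)=N(\wT)^\perp$ is closed.
\end{itemize}
That same identification of $R(\wT^\dagger)$ is what makes $(\wT^\dagger)^\dagger$ well defined in (4). You correctly flagged this.

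One wording point. The first two Penrose equations do not follow merely from $\wT\wT^\dagger$ and $\wT^\dagger\wT$ being self-adjoint idempotents. They follow from $P_{R(\wT)}\wT=\wT$ and $P_{N(\wT)^\perp}\wT^\dagger=\wT^\dagger$, the latter because $R(\wT^\dagger)\subseteq N(\wT)^\perp$. Both facts are immediate from the definition, so this affects only the phrasing, not the validity of the argument.
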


Assume $(\sigma, T)$ to be a c.b.c. representation of $E$ on $\mathcal{H}$ with closed range. Define $\wV': E \otimes \mathcal{H} \to \mathcal{H}$ \cite{DS1} by  $$\wV':=\widetilde{T}(\widetilde{T}^*\widetilde{T})^\dagger.$$ Then the representation $(\sigma,T')$ is called {\it Cauchy dual} of $(\sigma,T).$ Suppose $(\sigma, T)$ is {\it bounded below} (i.e., $c\|x\|\le \|\wT x\|$ for all $x\in E\ot \mathcal{H}$ and $c$ is a positive constant), it is not difficult to see that $(\sigma, T)$ is a bounded below representation if and only if $(\sigma, T)$ is left invertible representation, and hence $(\wT^*\wT)$ is invertible. Then  $\wV'=\widetilde{T}(\widetilde{T}^*\widetilde{T})^{-1}.$

\begin{proposition}\cite{DS1}\label{R1}
	Assume $(\sigma,T)$ to be a c.b.c. representation of $E$ on $\mathcal{H}$ with closed range. Then
	\begin{enumerate}
		\item  \label{DS1}	 $\wt{T}'=\widetilde{T}^{\dagger *}=\widetilde{T}^{*\dagger},$ $\wt{T}''=\widetilde{T}.$  
		\item $\wt{T}{^*}'=\wt{T}'{^*}$,     $\wt{T}'{^*}\wt{T}'=({\wt{T}^*\wV})'.$   
		\item $\wt{T}' =\widetilde{T}$ if and only if $\widetilde{T}$ is a partial isometry.
		\item  $\widetilde{T}^*\wt{T}'=\wt{T}{^*}'\widetilde{T}=P_{{N}(\widetilde{T})^{\perp}},$ $\wt{T}'\widetilde{T}^*=\widetilde{T}\wt{T}{^*}'=P_{{R}(\widetilde{T})}.$ 
	\end{enumerate}
\end{proposition}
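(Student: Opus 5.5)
The plan is to reduce every item to generic facts about a closed-range operator $\wT:E\ot\mathcal{H}\to\mathcal{H}$ and its Moore--Penrose inverse. The only extra ingredient is the definition $\wT'=\wT(\wT^*\wT)^\dagger$. Because $\wT$ has closed range, so does $\wT^*\wT$, and $R(\wT^*\wT)=R(\wT^*)=N(\wT)^\perp$. Moreover, $(\wT^*\wT)^\dagger$ is the inverse of $\wT^*\wT$ restricted to $N(\wT)^\perp$, extended by $0$ on $N(\wT)$. Intertwining with $\sigma$ is automatic: $\wT^*\wT$ commutes with $\phi(c)\ot I_{\mathcal H}$, hence so does its Moore--Penrose inverse. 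Therefore $(\sigma,T')$ is again a c.b.c. representation, with closed range equal to $R(\wT)$.

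\textbf{Part (1).} I will check the four Penrose equations for the candidate $X=\wT'^{\,*}=(\wT^*\wT)^\dagger\wT^*$ as a Moore--Penrose inverse of $\wT$. First, $X\wT=(\wT^*\wT)^\dagger\wT^*\wT=P_{N(\wT)^\perp}$, which is selfadjoint. From this, $\wT X\wT=\wT P_{N(\wT)^\perp}=\wT$. Next, $X\wT X=P_{N(\wT)^\perp}X=X$, since $R(X)\subseteq R((\wT^*\wT)^\dagger)=N(\wT)^\perp$. Finally, $\wT X$ is selfadjoint and idempotent with range $R(\wT)$, so $\wT X=P_{R(\wT)}$. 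By uniqueness of the Moore--Penrose inverse, $X=\wT^\dagger$, so $\wT'=\wT^{\dagger*}$. The equality $\wT^{\dagger*}=\wT^{*\dagger}$ is item (4) of the Proposition from \cite{AHS22}. Then $\wT''=((\wT')^\dagger)^*=((\wT^{\dagger*})^{\dagger})^*=\wT$, using $(\wT^\dagger)^\dagger=\wT$ and $(A^*)^\dagger=(A^\dagger)^*$.

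\textbf{Part (2).} Here I interpret $\wT^*{}'$ as the Cauchy dual $(\wT^*)^{*\dagger}=\wT^\dagger$ of the closed-range operator $\wT^*$. Part (1) gives $\wT'^{\,*}=\wT^{\dagger}$, which is the first identity. For the second, $\wT'^{\,*}\wT'=\wT^\dagger\wT^{\dagger*}$, and I must show this equals $(\wT^*\wT)'=((\wT^*\wT)^\dagger)^*=(\wT^*\wT)^\dagger$. The standard identity $(A^*A)^\dagger=A^\dagger A^{\dagger*}$ can be checked directly by the Penrose equations, using $A^{\dagger*}A^*=(AA^\dagger)^*=AA^\dagger$ and $A^\dagger AA^\dagger=A^\dagger$.

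\textbf{Parts (3) and (4).} In part (3), $\wT$ is a partial isometry exactly when $\wT^*\wT=P_{N(\wT)^\perp}$. In that case $(\wT^*\wT)^\dagger=P_{N(\wT)^\perp}$ and $\wT'=\wT$. Conversely, if $\wT'=\wT$, then $\wT^\dagger=\wT^*$, and the Penrose equation $\wT\wT^\dagger\wT=\wT$ becomes $\wT\wT^*\wT=\wT$, which characterizes partial isometries. Part (4) follows directly from $\wT'=\wT^{\dagger*}$ together with item (3) of the Proposition from \cite{AHS22}. Indeed, $\wT^*\wT'=(\wT'^{\,*}\wT)^*=(\wT^\dagger\wT)^*=P_{N(\wT)^\perp}$. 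Also $\wT^*{}'\wT=\wT^\dagger\wT=P_{N(\wT)^\perp}$. The same argument with $\wT\wT^\dagger=P_{R(\wT)}$ gives the other two identities.

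The main obstacle is purely bookkeeping. One has to pin down the meaning of $\wT^*{}'$ and make sure the Cauchy dual of the adjoint is taken in the operator sense rather than as a representation. The only genuinely analytic point is the closedness of $R(\wT^*\wT)$ and the description of $(\wT^*\wT)^\dagger$ as an inverse on $N(\wT)^\perp$. Both follow since $\wT$ restricted to $N(\wT)^\perp$ is bounded below onto the closed set $R(\wT)$.
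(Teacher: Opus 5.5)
The paper does not prove this proposition; it is quoted from \cite{DS1} without argument, so there is no in-paper proof to compare your route against. Your proof is correct and complete as a stand-alone argument.

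The one substantive step is checking the four Penrose equations for $(\wT^*\wT)^\dagger\wT^*$, which gives $\wT'^{*}=\wT^\dagger$. Everything in (2)--(4) then follows from identities valid for any closed-range operator between Hilbert spaces: $(A^*)^\dagger=(A^\dagger)^*$, $(A^\dagger)^\dagger=A$, $(A^*A)^\dagger=A^\dagger A^{\dagger *}$, $AA^\dagger=P_{R(A)}$ and $A^\dagger A=P_{R(A^*)}$.

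Your argument therefore shows that the covariance structure plays no role in the identities themselves. It enters only to ensure that $(\sigma,T')$ is again a c.b.c. representation with closed range $R(\wT')=R(\wT)$. You rightly verify this, since it is what makes $\wT''$ meaningful and lets you reapply part (1) to $\wT'$.

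The one step stated without justification is that $(\wT^*\wT)^\dagger$ commutes with $\phi(c)\ot I_{\mathcal H}$. Add a line such as the following. Because $\wT$ has closed range, the spectrum of $\wT^*\wT$ lies in $\{0\}\cup[\varepsilon,\|\wT\|^2]$ for some $\varepsilon>0$. Hence $(\wT^*\wT)^\dagger=f(\wT^*\wT)$, where $f(0)=0$ and $f(t)=1/t$ otherwise, and $f$ is continuous on the spectrum. So $(\wT^*\wT)^\dagger$ is a norm limit of polynomials in $\wT^*\wT$ and inherits its commutation with $\phi(c)\ot I_{\mathcal H}$.

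Your reading of $(\wT^{*})'$ and $(\wT^*\wT)'$ as operator-level Cauchy duals $A(A^*A)^\dagger$ is the correct one. Neither operator has the form $\widetilde{S}$ for a representation of $E$ on $\mathcal H$, so no representation-level reading is available.
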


\section{Wandering Subspaces for Covariant Representations}

The notion of a wandering subspace plays a central role in classical invariant subspace theory. In the setting of isometries, wandering subspaces provide a complete description of invariant subspaces via Beurling’s theorem \cite{Beurling}. This idea was later extended to weighted shifts and operators satisfying suitable operator inequalities \cite{Izuchi1,SS01}. In this section, we introduce and analyze wandering subspaces for covariant representations of $C^*$-correspondences.

\begin{definition}
	 A closed subspace $\mathcal{W}\subseteq \mathcal{H}$ is said to be a {\rm wandering subspace} for $(\sigma, T)$ if $\mathcal{W}\perp \wT_m(E^{\ot m}\ot \mathcal{W})$ for  $m\ge 1$. The wandering subspace $\mathcal{W}$ is called {\rm generating} (or $(\sigma, T)$ is said to have the GWS-property) if $$\mathcal{H}=[\mathcal{W}]_{\wT} := \bigvee_{m\ge 0}\wT_m(E^{\ot m}\ot \mathcal{W}).$$ 
\end{definition}

The following result from \cite[Theorem 3.13]{HS19} is a generalization of Shimorin’s Wold-type decomposition \cite[Theorem 3.6]{SS01}.

\begin{theorem}\label{MT1}
	Assume $(\sigma,T)$ to be a  c.b.c-representation of $E$ on  $\mathcal{H},$ which satisfies any one of the following conditions:
	\begin{enumerate}
		\item[(a)] $\| \wt{T}_2\zeta\|^2+\|\zeta\|^2 \leq 2 \|(I_E \otimes  \wt{T})\zeta\|^2,\quad \quad \zeta \in E^{\otimes 2}\ot \mathcal{H},$ that is, $(\sigma,T)$ is concave.
		\item[(b)] \label{DT1} $\|(I_E\ot \wV)\zeta+\eta\|^2 \le 2(\|\zeta\|^2 + \|\wV \eta\|^2), \quad \zeta \in E^{\ot 2}\ot \mathcal{H}, \eta \in E\ot \mathcal{H}.$ 
	\end{enumerate}
	Then $(\sigma,T)$ admits Wold-type decomposition. That is, there exists a wandering subspace $\mathcal{W}$ of $(\sigma,T)$ which uniquely decomposes $\mathcal{H}$ into the direct sum of two $(\sigma,T)$-reducing subspaces 
	$$\mathcal{H}=\bigvee_{m\geq0}\wV_m(E^{\ot m}\ot \mathcal{W}) \bigoplus \bigcap_{m \geq 0}\wt{T}_m(E^{\otimes m} \otimes \mathcal{H}),$$ such that $(\sigma,T)|_{\bigcap_{m \geq 0}\wt{T}_m(E^{\otimes m} \otimes \mathcal{H})}$ is isometric as well as  co-isometric representation.
	In particular, if $(\sigma,T)$ is analytic (that is, ${\bigcap_{m \geq 0}\wt{T}_m(E^{\otimes m} \otimes \mathcal{H})}=\{0\}$  ), then  $(\sigma,T)$ has GWS-property.
\end{theorem}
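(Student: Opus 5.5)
Following Shimorin's scheme for concave operators, the plan is to work with the Cauchy dual $\wT'$ and the candidate wandering subspace
$$\mathcal{W}:=\mathcal{H}\ominus \wT(E\ot\mathcal{H}) = N(\wT^*).$$
First I would show that either (a) or (b) forces $(\sigma,T)$ to be bounded below. Under (a), taking $\zeta=\xi\ot \eta$-type elements and comparing the inequality with the expansivity it implies should give $\|\wT x\|\ge\|x\|$. Under (b), the choice $\zeta=0$ gives $\|\eta\|^2\le 2\|\wT\eta\|^2$. Hence $\wT$ has closed range and $\wT'=\wT(\wT^*\wT)^{-1}$, and $\wT'^*$ is a left inverse of $\wT$ by Proposition~\ref{R1}(4). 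Since $\wT$ intertwines $\phi(c)\ot I$ and $\sigma(c)$, the subspace $\mathcal{W}$ is $\sigma(\mathcal{A})$-invariant.

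Next I would establish the Wold decomposition through the associated sequence of projections. Put
$$L:=\wT'^*:\mathcal{H}\to E\ot\mathcal{H},\qquad L_m:=(I_{E^{\ot m-1}}\ot L)\cdots L .$$
The key identity is the finite decomposition
$$h=\sum_{k=0}^{m-1}\wT_k(I_{E^{\ot k}}\ot P_{\mathcal{W}})L_k h+\wT_m L_m h,$$
which follows by iterating $I=P_{\mathcal{W}}+\wT L$, using $\wT L=P_{R(\wT)}$ from Proposition~\ref{R1}(4). The inequality then has to be transferred to the Cauchy dual:
\begin{itemize}
\item (a) for $\wT$ should yield the dual inequality (b) for $\wT'$, by Shimorin's duality argument lifted with the $I_E\ot$ amplifications;
\item (b) should yield the bound $\|L_m h\|\le C\|h\|$ uniformly in $m$, or at least monotonicity of $\|\wT_m L_m h\|$.
\end{itemize}
With these in hand, one shows that the partial sums converge and that the remainders $\wT_mL_mh$ converge weakly to $P_{\mathcal{H}_\infty}h$, where
$$\mathcal{H}_\infty=\bigcap_m \wT_m(E^{\ot m}\ot\mathcal{H}).$$
Orthogonality of $[\mathcal{W}]_{\wT}$ and $\mathcal{H}_\infty$ would come from the wandering property: $\mathcal{W}\perp \wT_m(E^{\ot m}\ot\mathcal{W})$ because $\wT_m$ maps into $R(\wT)$.

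For the reducing property and the structure on $\mathcal{H}_\infty$, I would argue as follows. Invariance of $\mathcal{H}_\infty$ under $\wT$ and the fact that $\wT$ maps $E\ot\mathcal{H}_\infty$ onto $\mathcal{H}_\infty$ follow from the definition together with injectivity. The inequality restricted to $\mathcal{H}_\infty$, where $\wT$ is surjective, then forces $\wT|$ to be an isometry: concavity plus surjectivity plus boundedness of iterates gives $\|\wT x\|=\|x\|$, by the standard argument that a concave sequence of norms $\|\wT_m x\|^2$ must be affine and, being bounded both ways under invertibility, constant. So $\wT|$ is unitary, i.e. the restriction is isometric and co-isometric. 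Uniqueness is obtained because $\mathcal{W}$ must equal $\mathcal{H}\ominus\wT(E\ot\mathcal{H})$ for any such decomposition. The analytic case is immediate.

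I expect the main obstacle to be the second step: proving convergence of the Shimorin expansion, i.e. that $\wT_mL_mh\to P_{\mathcal{H}_\infty}h$ and that the partial sums span the orthocomplement. This requires the precise duality between (a) and (b) at the tensor level. I would first try to prove that (a) for $\wT$ implies $\|(I_E\ot L)\eta\|$-type estimates giving $\|L_m\|\le 1$ and monotone decreasing $\|\wT_m L_m h\|$, so that weak limits exist.
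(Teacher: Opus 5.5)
The paper gives no argument for this statement; it quotes it from \cite[Theorem 3.13]{HS19}, the covariant form of Shimorin's theorem. Your framework is the right one: the Cauchy dual, the left inverse $L=\wT'^*$, and the expansion $h=\sum_{k<m}\wT_k(I_{E^{\ot k}}\ot P_{\mathcal{W}})L_kh+\wT_mL_mh$. But the steps that carry the content are either false or missing.

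\textbf{A false duality.} Condition (a) for $\wT$ does \emph{not} give (b) for $\wT'$. The congruence in the proof of Theorem~\ref{DT6} runs only from (b) for $\wT$ to concavity of $\wT'$, because $I_E\ot\wT'$ is not onto. Take $\mathcal{A}=E=\mathbb{C}$ and the weighted shift $Te_0=10e_1$, $Te_k=e_{k+1}$ ($k\ge1$). It is concave, yet $T'e_0=e_1/10$, so (b) for $T'$ fails at $\zeta=0$, $\eta=e_0$.

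\textbf{A false bound.} Condition (b) does not give $\sup_m\|L_mh\|<\infty$. Let $T$ be the Cauchy dual of the Dirichlet shift $D$ (the Bergman shift). Then $T$ satisfies (b), $L=D^*$ and $\|D^m\|=\sqrt{m+1}$, so by uniform boundedness $\|L^mh\|$ is unbounded for some $h$.

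\textbf{Orthogonality and reducing.} The wandering property only gives $\mathcal{W}\perp\mathcal{H}_\infty$. For $k\ge1$, $\wT_k(E^{\ot k}\ot\mathcal{W})\perp\mathcal{H}_\infty$ needs $\wT^*\mathcal{H}_\infty\subseteq E\ot\mathcal{H}_\infty$, i.e.\ the reducing property. You never prove this, and unitarity of $\wT$ on $E\ot\mathcal{H}_\infty$ alone does not imply it.

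\textbf{Convergence and boundedness below.} Convergence of the expansion is left open. In case (a), boundedness below needs the concave-sequence argument of \cite[Lemma 2.2]{HS19}; ``comparing the inequality with the expansivity it implies'' is not an argument.

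Here is how the gaps close. Treat case (a) first.

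\emph{Reducing.} There $\wT^*\wT\ge I$, so $\|L\|\le 1$. Once $\wT$ is isometric on $E\ot\mathcal{H}_\infty$ (your concave-sequence argument, run through the bijection $\wT\colon E\ot\mathcal{H}_\infty\to\mathcal{H}_\infty$), expansivity forces $\wT^*\wT y=y$ there. Hence $\wT^*\mathcal{H}_\infty\subseteq E\ot\mathcal{H}_\infty$, and the invariant subspace $\mathcal{H}_\infty^\perp\supseteq\mathcal{W}$ contains $[\mathcal{W}]_{\wT}$.

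\emph{Convergence.} Amplified concavity gives $\|\wT_m\zeta\|^2\le\|\zeta\|^2+m(\|(I_{E^{\ot(m-1)}}\ot\wT)\zeta\|^2-\|\zeta\|^2)$. Take $\zeta=L_mh$ and use $(I_{E^{\ot(m-1)}}\ot\wT)L_m=(I_{E^{\ot(m-1)}}\ot P_{R(\wT)})L_{m-1}$. This yields $\|\wT_mL_mh\|^2\le\|h\|^2+m\,d_m$, where $d_m=\|L_{m-1}h\|^2-\|L_mh\|^2\ge0$ is summable, so $\liminf_m m\,d_m=0$. Let $g$ be a weak cluster point of a bounded subsequence of $\wT_mL_mh$. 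It lies in every closed $R(\wT_k)$, so $g\in\mathcal{H}_\infty$ and $h-g\in[\mathcal{W}]_{\wT}$.

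\emph{Case (b).} Dualize in the valid direction: $\wT'$ is concave (Theorem~\ref{DT6}). Your expansion shows $\mathcal{H}=[\mathcal{W}]_{\wT'}\oplus\mathcal{H}_\infty(\wT)$ for \emph{every} bounded-below representation, because $L_k=(\wT'_k)^*$. Apply this to $\wT$ and to $\wT'$ (with $R(\wT')=R(\wT)$ and $\wT''=\wT$), and use case (a) for $\wT'$. This yields $\mathcal{H}_\infty(\wT)=\mathcal{H}_\infty(\wT')$ and $\mathcal{H}=[\mathcal{W}]_{\wT}\oplus\mathcal{H}_\infty(\wT)$. The restriction is unitary because $\wT^*\wT=(\wT'^*\wT')^{-1}=I$ on $E\ot\mathcal{H}_\infty$.
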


The following theorem is a generalization of \cite[Theorem 1.1]{Izuchi1}.
\begin{theorem}\label{DT2}
	Assume $(\sigma,T)$ to be a c.b.c-representation of $E$ on $\mathcal{H}$ with satisfies the following conditions:
	\begin{enumerate}
		\item \label{C1} $\|\wV \eta\|^2 + \|\wV_2^*\wV \eta\|^2 \le 2\|\wV^* \wV \eta\|^2 , \quad \eta \in E\ot \mathcal{H}.$
		\item \label{C3} $(\sigma,T)$ is bounded below.
		\item  $(\sigma,T)$ is completely contractive covariant representation.
		\item \label{C2} $\|\wV^*_n h\|\to 0,$ as $n\to \infty$ for all $h\in \mathcal{H}.$ 
	\end{enumerate} Then $(\sigma,T)$ has GWS-property.
\end{theorem}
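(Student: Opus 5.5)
The plan is to move to the Cauchy dual $(\sigma,T')$, get a Wold-type decomposition for it, and then come back to $(\sigma,T)$. This follows the scheme of \cite{Izuchi1} and Shimorin's duality argument.

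Since $(\sigma,T)$ is bounded below by (\ref{C3}), $\wT^*\wT$ is invertible and $\wT'=\wT(\wT^*\wT)^{-1}$. By Proposition \ref{R1} we then have $\wT^*\wT'=I_{E\ot\mathcal H}$ and $N(\wT'^*)=N(\wT^*)=\mathcal H\ominus R(\wT)$.

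\textbf{Step 1: rewrite (\ref{C1}) for the dual.} Substitute $\eta=(\wT^*\wT)^{-1}\xi$ with $\xi\in E\ot\mathcal H$. This gives $\wT\eta=\wT'\xi$ and $\wT^*\wT\eta=\xi$. Using $\wT_2=\wT(I_E\ot\wT)$ we also get $\wT_2^*\wT\eta=(I_E\ot\wT^*)\xi$. So (\ref{C1}) becomes
$$\|\wT'\xi\|^2+\|(I_E\ot \wT^*)\xi\|^2\le 2\|\xi\|^2,\qquad \xi\in E\ot\mathcal H .$$
I will then polarize this inequality, in the spirit of Shimorin's proof that concavity of $T$ passes to condition (b) for the dual. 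Concretely, I will write $\xi=(I_E\ot\wT')\zeta+\eta'$ with $\zeta\in E^{\ot2}\ot\mathcal H$, $\eta'\in E\ot\mathcal H$, and use $(I_E\ot\wT^*)(I_E\ot\wT')=I$. The goal is to show that $(\sigma,T')$ satisfies condition (b) of Theorem \ref{MT1}, possibly with the roles of $\wT'$ and its adjoint arranged appropriately.

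\textbf{Step 2: apply Theorem \ref{MT1} to $(\sigma,T')$.} This gives
$$\mathcal H=[\mathcal W]_{\wT'}\oplus\bigcap_{m\ge0}\wT'_m(E^{\ot m}\ot\mathcal H),\qquad \mathcal W=N(\wT'^*)=N(\wT^*).$$
The restriction of $(\sigma,T')$ to the second summand is isometric and co-isometric.

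\textbf{Step 3: transfer back to $(\sigma,T)$.} I will prove the duality inclusion
$$\mathcal H\ominus[N(\wT^*)]_{\wT}\ \subseteq\ \bigcap_{m\ge0}\wT'_m(E^{\ot m}\ot\mathcal H).$$
Take $h\perp \wT_m(E^{\ot m}\ot N(\wT^*))$ for all $m$. Decompose $h=P_{N(\wT^*)}h+\wT\wT'^*h$; the first term vanishes, so $h=\wT\wT'^*h$. Iterating, $h=\wT_m\wT'^{*}_m h$ for every $m$, where $\wT'^{*}_m$ is built from the tensor powers of $\wT'^*$ as in the Notation. Passing this to $\wT'$ via the identities of Proposition \ref{R1} should place $h$ in every $R(\wT'_m)$. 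I expect to mirror the classical identity $([\ker T^*]_T)^\perp=\bigcap_n T'^n\mathcal H$ at this point.

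\textbf{Step 4: kill the unitary part.} Let $\mathcal K=\bigcap_{m\ge0}\wT'_m(E^{\ot m}\ot\mathcal H)$. On $\mathcal K$ the dual is isometric and co-isometric, so $\|\wT'^*_n h\|=\|h\|$ for $h\in\mathcal K$. Since $\wT$ is contractive, $\wT'$ is expansive, so $\wT^*\wT\le I$. On $\mathcal K$ I plan to show $\wT'$ and $\wT$ agree, so that $\wT$ restricted there is also a co-isometry. Then (\ref{C2}) forces $\|h\|=\|\wT^*_nh\|\to0$, hence $\mathcal K=\{0\}$. Combined with Step 3 this gives $\mathcal H=[N(\wT^*)]_{\wT}$, which is the GWS-property.

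The main obstacle is Step 1. The one-variable polarization is already delicate, and here tensor factors $I_E\ot\cdot$ and the intertwining with $\sigma$ must be tracked throughout. If condition (b) does not come out directly, I will instead verify concavity (a) for a suitable dual-type representation.
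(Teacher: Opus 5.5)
Your overall scheme (concave dual, Wold decomposition for it, transfer back, kill the unitary part with (4)) can be made to work. However, the target you set in Step 1, condition (b) for $(\sigma,T')$, is false in general, so the polarization you single out as the main obstacle cannot be carried out.

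\textbf{Step 1 aims at a false statement.} The inequality you correctly derive is
$$\|\wT'\xi\|^2+\|(I_E\ot\wT^*)\xi\|^2\le 2\|\xi\|^2, \qquad\text{i.e.}\qquad (I_E\ot\wT\wT^*)+(\wT^*\wT)^{-1}\le 2I_{E\ot\mathcal H}.$$
This is condition (ii) of Remark~\ref{WWW}, which is condition (b) for $(\sigma,T)$ itself, not for its dual. For the dual, the same equivalence turns (b) into $(I_E\ot\wT'\wT'^*)+\wT^*\wT\le 2I$. Compressing by $I_E\ot\wT$ and using $\wT'^*\wT=I$ gives $I+\wT_2^*\wT_2\le 2(I_E\ot\wT^*\wT)$. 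That says $(\sigma,T)$ is concave, hence expansive, hence isometric by (3). The Bergman shift ($E=\mathcal A=\mathbb C$, weights $\sqrt{(m+1)/(m+2)}$) satisfies (1)--(4) and is not isometric, so its dual does not satisfy (b).

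Your fallback is the right move, and it follows in one line from your own decomposition with $\eta'=0$. Put $\xi=(I_E\ot\wT')\zeta$ and use $(I_E\ot\wT^*)(I_E\ot\wT')=I$ to get $\|\wT'_2\zeta\|^2+\|\zeta\|^2\le 2\|(I_E\ot\wT')\zeta\|^2$. So $(\sigma,T')$ is concave, which is the computation in the proof of Theorem~\ref{DT6}, and Theorem~\ref{MT1}(a) applies to it.

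\textbf{Step 3 iterates the wrong factorization.} From $h=\wT\wT'^*h$ you cannot continue. The hypothesis $h\perp[N(\wT^*)]_{\wT}$ gives $\wT_m^*h\in E^{\ot m}\ot R(\wT)$, but says nothing about $\wT'^*h=(\wT^*\wT)^{-1}\wT^*h$. Even if it went through, it would land $h$ in $R(\wT_m)$, not in $R(\wT'_m)$. Factor the same projection as $P_{R(\wT)}=\wT'\wT^*$ instead:
$$\wT_m^*h=(I_{E^{\ot m}}\ot\wT'\wT^*)\wT_m^*h=(I_{E^{\ot m}}\ot\wT')\wT_{m+1}^*h,$$
so $h=\wT'_m\wT_m^*h\in R(\wT'_m)$ for every $m$.

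With these two repairs, Step 4 works as you describe. Contractivity gives $(\wT^*\wT)^{-1}\ge I$, so $\|\wT'\eta\|=\|\eta\|$ on $E\ot\mathcal K$ forces $\wT^*\wT\eta=\eta$, hence $\wT=\wT'$ there. Then $\|\wT_n^*h\|=\|h\|$ on $\mathcal K$, and (4) gives $\mathcal K=\{0\}$.

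\textbf{Comparison with shorter routes.} The detour through the dual is not needed. Your Step 1 inequality is already (b) for $(\sigma,T)$, so Theorem~\ref{MT1} applies to $(\sigma,T)$ directly, and (4) kills its isometric--co-isometric part at once; this uses neither Step 3 nor (3). The paper avoids Wold decompositions altogether. For $h\perp[\mathcal H\ominus\wT(E\ot\mathcal H)]_{\wT}$ it writes $\wT_n^*h=(I_{E^{\ot n}}\ot\wT)\eta_{n+1}$ and applies the tensored form of (1) to $\eta_{n+1}$. This shows $c_n=\|\wT_n^*h\|^2-\|\wT_{n+1}^*h\|^2$ is nondecreasing; it is nonnegative by (3) and tends to $0$ by (4). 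Hence $c_n=0$ for all $n$, and $\|h\|=\|\wT_n^*h\|\to0$. That is a short, self-contained second-difference argument.
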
\begin{proof}
	Suppose that $M=\mathcal{H}\ominus [\mathcal{H}\ominus \wV(E\ot \mathcal{H})]_{\wV},$ we want to prove $M=\{0\}.$ Let $h\in M,$ then $h\perp \wV_n(E^{\ot n}\ot (\mathcal{H}\ominus \wV(E\ot \mathcal{H})))$ for all $n\ge 0.$ Since $R(\wV)$ is closed, $\wV_n^* h\in E^{\ot n}\ot R(\wV),$ then there exists $\eta_{n+1}\in E^{\ot n+1}\ot \mathcal{H}$ such that $\wV^*_n h=(I_{E^{\ot n }}\ot \wV)\eta_{n+1}.$ If $\wV$ is contractive, then $\|\wV_{n+1}^*h\|\le \|\wV_{n}^*h\|.$ From Equation \ref{C1}, it is easy to verify that $$\|(I_{E^{\ot n }}\ot \wV)x\|^2-2\|(I_{E^{\ot n }}\ot \wV^*\wV)x\|^2+\|(I_{E^{\ot n }}\ot \wV^*_2\wV)x\|^2\le 0,$$ for all $x\in E^{\ot n+1}\ot \mathcal{H}.$
	Let $c_n=\|\wV_{n}^*h\|^2-\|\wV_{n+1}^*h\|^2,$ then \begin{align*}
		c_n-c_{n+1}&=\|\wV_{n}^*h\|^2-2\|\wV_{n+1}^*h\|^2+\|\wV_{n+2}^*h\|^2\\&= \|(I_{E^{\ot n }}\ot \wV)\eta_{n+1}\|^2-2\|(I_{E^{\ot n }}\ot \wV^*\wV)\eta_{n+1}\|^2+\|(I_{E^{\ot n }}\ot \wV^*_2\wV)\eta_{n+1}\|^2\le 0.
	\end{align*} This implies that $0\le c_n\le c_{n+1},$ from Equation \ref{C2} we get $$c_n=\|\wV_{n}^*h\|^2-\|\wV_{n+1}^*h\|^2\to 0 ~~~~\mbox{as}~~n\to \infty.$$ Thus $c_n=0$ for all $n\ge 0.$ Observe that $\|h\|^2=\|\wV_n^* h\|^2$ for all $n\ge 0.$ Again using Equation \ref{C2}, we obtain $h=0.$ Therefore $\mathcal{H}= [\mathcal{H}\ominus \wV(E\ot \mathcal{H})]_{\wV}.$
\end{proof}

Now we are study a relation between the conditions of Theorem \ref{DT1} and Theorem \ref{DT2}.
\begin{remark}\label{WWW}
Assume $(\sigma,T)$ to be a c.b.c-representation of $E$ on $\mathcal{H},$ then the following conditions are equivalent:
	\begin{enumerate}
		\item[(i)]  $\|(I_E\ot \wV)\zeta+\eta\|^2 \le 2(\|\zeta\|^2 + \|\wV \eta\|^2), \quad \zeta \in E^{\ot 2}\ot \mathcal{H}, \eta \in E\ot \mathcal{H}.$ 
		\item[(ii)] $(\sigma,T)$ is bounded below and $(I_E \otimes \wt{T}\wt{T}^*)+(\wt{T}^*\wt{T})^{-1} \leq 2I_{E \otimes \mathcal{H}}.$ 
		\item[(iii)] 	$(\sigma,T)$ is bounded below and $\|\wV \eta\|^2 + \|\wV_2^*\wV \eta\|^2 \le 2\|\wV^* \wV \eta\|^2 , \quad \eta \in E\ot \mathcal{H}.$
	\end{enumerate}
\end{remark}
\begin{proof}
	From \cite[Theorem 3.13]{HS19} we get $(i)\Leftrightarrow (ii).$
	
	 	Let $(I_E \otimes \wt{T}\wt{T}^*)+(\wt{T}^*\wt{T})^{-1} \leq 2I_{E \otimes \mathcal{H}}$ if and only if $$\langle(I_E \otimes \wt{T}\wt{T}^*)z,z  \rangle + \langle(\wt{T}^*\wt{T})^{-1}z,z \rangle \le 2\|z\|^2~~~\mbox{for all}   \quad  z\in E\ot \mathcal{H}.$$ Since $\wT^*\wT$ is invertible, substituting $(\wt{T}^*\wt{T})^{-1}z=y,$ then the above inequality is equivalent to $$\|\wT y\|^2 + \|\wT_2^*\wT y\|^2 \le 2\|\wT^* \wT y\|^2 , \quad \quad y \in E\ot \mathcal{H}.$$ This implies that $(ii)\Leftrightarrow (iii).$
\end{proof}

\begin{corollary}
	Let $(\sigma,T)$ be a c.b.c-representation of $E$ on $\mathcal{H}$ which satisfies any one condition of
	Remark \ref{WWW}, then $(\sigma, T)$ admits Wold-type decomposition.
\end{corollary}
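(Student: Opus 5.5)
The corollary follows by combining Remark \ref{WWW} with Theorem \ref{MT1}; no new estimates are needed.

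Suppose $(\sigma,T)$ satisfies one of the conditions (i), (ii) or (iii) of Remark \ref{WWW}. By the remark, the three conditions are equivalent, so $(\sigma,T)$ satisfies (i):
\[
\|(I_E\ot \wV)\zeta+\eta\|^2 \le 2(\|\zeta\|^2 + \|\wV \eta\|^2), \qquad \zeta \in E^{\ot 2}\ot \mathcal{H},\ \eta \in E\ot \mathcal{H}.
\]
This is exactly hypothesis (b) of Theorem \ref{MT1}, and $(\sigma,T)$ is a c.b.c-representation as that theorem requires. Theorem \ref{MT1} then provides a wandering subspace $\mathcal{W}$ and the unique orthogonal decomposition of $\mathcal{H}$ into the two $(\sigma,T)$-reducing subspaces
\[
\bigvee_{m\ge 0}\wV_m(E^{\ot m}\ot\mathcal{W}) \quad\text{and}\quad \bigcap_{m\ge 0}\wV_m(E^{\ot m}\ot\mathcal{H}),
\]
with $(\sigma,T)$ restricted to the second summand being both isometric and co-isometric. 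This is the Wold-type decomposition.

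The step needing the most care is the passage from (iii), or (ii), back to (i). Here the bounded-below hypothesis matters: it makes $\wV^*\wV$ invertible, which is what the substitution $y=(\wV^*\wV)^{-1}z$ in the proof of Remark \ref{WWW} relies on. That substitution is a bijection of $E\ot\mathcal{H}$, so the implication from (iii) to (ii) holds. The implication from (ii) to (i) is \cite[Theorem 3.13]{HS19}, already cited in the remark. Once these are in place, the corollary is a direct application of Theorem \ref{MT1}.
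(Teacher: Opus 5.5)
Your proposal is correct and is exactly the argument the paper intends, since the corollary is stated without a separate proof: Remark~\ref{WWW} reduces conditions (ii) and (iii) to condition (i), which is verbatim hypothesis (b) of Theorem~\ref{MT1}, and that theorem then gives the Wold-type decomposition. Your check of the substitution $z=\wV^*\wV y$ behind the equivalence (ii)$\Leftrightarrow$(iii) is also correct.
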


\begin{proposition}\label{DT5}
	Let $(\sigma,T)$ be a c.b.c-representation of $E$ on  $\mathcal{H}.$ If $(\sigma,T)$ satisfies the conditions $(1)$ and $(2)$ in Theorem \ref{DT2}, then for every invariant subspace $\mathcal{K}$ of $(\sigma,T)$ we have 
	$$\|\wV \eta\|^2 +\| (\wV(I_E\ot \wV|_{E\ot \mathcal{K}}))^*\wV \eta\|^2\le 2\|(\wV |_{E\ot \mathcal{K}})^*\wV \eta\|^2, \quad \eta \in E\ot \mathcal{K}.$$
\end{proposition}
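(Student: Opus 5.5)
The plan is to pass through the operator form of the hypothesis, as in Remark \ref{WWW}. Let $\mathcal{K}$ be $(\sigma,T)$-invariant and put $S:=\wV|_{E\ot \mathcal{K}}:E\ot\mathcal{K}\to\mathcal{K}$. This is well defined because $\mathcal{K}$ is $T(\xi)$-invariant.

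First, $\mathcal{K}$ is $\sigma(\mathcal{A})$-invariant and $\sigma$ is a $*$-representation, so $\mathcal{K}$ reduces $\sigma(\mathcal{A})$. Hence $P_{\mathcal{K}}$ commutes with $\sigma(\mathcal{A})$, and $I_E\ot P_{\mathcal{K}}=P_{E\ot\mathcal{K}}$ is a well-defined projection on $E\ot\mathcal{H}$. We have $S^*=P_{E\ot\mathcal{K}}\wV^*|_{\mathcal{K}}$ and $S_2:=S(I_E\ot S)=\wV_2|_{E^{\ot 2}\ot\mathcal{K}}$. Moreover $S$ is bounded below, because $\wV$ is. So $S^*S=P_{E\ot\mathcal{K}}\wV^*\wV|_{E\ot\mathcal{K}}$ is invertible on $E\ot\mathcal{K}$.

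By Remark \ref{WWW}, conditions (1) and (2) of Theorem \ref{DT2} give
\[
(I_E\ot \wV\wV^*)+(\wV^*\wV)^{-1}\le 2I_{E\ot\mathcal{H}}.
\]
The goal is to prove the analogous inequality for $S$ on $E\ot\mathcal{K}$:
\[
(I_E\ot SS^*)+(S^*S)^{-1}\le 2I_{E\ot\mathcal{K}}.
\]
Once this holds, the substitution argument in the proof of Remark \ref{WWW}, with $y=(S^*S)^{-1}z$, turns it into $\|Sy\|^2+\|S_2^*Sy\|^2\le 2\|S^*Sy\|^2$ for $y\in E\ot\mathcal{K}$. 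Since $Sy=\wV y$ and $S_2=\wV(I_E\ot \wV|_{E\ot\mathcal{K}})$, this is exactly the claimed inequality.

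To obtain the operator inequality for $S$, compress the one for $\wV$ by $P:=P_{E\ot\mathcal{K}}$ and bound each term separately.

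For the first term, $SS^*=P_{\mathcal{K}}\wV P\wV^*|_{\mathcal{K}}\le P_{\mathcal{K}}\wV\wV^*|_{\mathcal{K}}$, since $P\le I$. Tensoring with $I_E$ preserves order, which gives $I_E\ot SS^*\le P(I_E\ot\wV\wV^*)P|_{E\ot\mathcal{K}}$.

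For the second term, the needed fact is the compression inequality for inverses: if $A=\wV^*\wV$ is positive and invertible, then $(PAP|_{\mathrm{ran}P})^{-1}\le PA^{-1}P|_{\mathrm{ran}P}$. To prove it, take $x\in\mathrm{ran}P$ and set $y=(PAP)^{-1}x\in\mathrm{ran}P$. Then
\[
\langle (PAP)^{-1}x,x\rangle=\langle y,Ay\rangle^{\,}=\langle A^{1/2}y,A^{-1/2}x\rangle,
\]
using $\langle y,x\rangle=\langle y,PAPy\rangle=\langle y,Ay\rangle$. Cauchy--Schwarz then gives $\langle (PAP)^{-1}x,x\rangle\le\langle (PAP)^{-1}x,x\rangle^{1/2}\langle A^{-1}x,x\rangle^{1/2}$, which is the desired bound.

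Adding the two estimates gives
\[
(I_E\ot SS^*)+(S^*S)^{-1}\le P\bigl((I_E\ot\wV\wV^*)+(\wV^*\wV)^{-1}\bigr)P\le 2P=2I_{E\ot\mathcal{K}}.
\]

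I expect the main obstacle to be the inverse-compression step together with the bookkeeping needed to make $I_E\ot P_{\mathcal{K}}$ legitimate. The latter uses that $P_{\mathcal{K}}$ commutes with $\sigma(\mathcal{A})$. The remaining steps are routine order manipulations and the same substitution already used in Remark \ref{WWW}.
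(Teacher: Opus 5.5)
Your proof is correct, but it takes a different and longer route than the paper. The paper uses the vector form (i) of Remark \ref{WWW}, $\|(I_E\ot \wV)\zeta+\eta\|^2 \le 2(\|\zeta\|^2+\|\wV\eta\|^2)$. This involves only $\wV$, with no adjoints or inverses, so it passes to $(\sigma,T)|_{\mathcal{K}}$ simply by taking $\zeta\in E^{\ot 2}\ot\mathcal{K}$ and $\eta\in E\ot\mathcal{K}$. Then (i)$\Rightarrow$(iii) of Remark \ref{WWW}, applied to the restricted representation, gives the claim.

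You work instead with the operator form (ii). That form is not inherited by restriction for free, because $SS^*$ and $(S^*S)^{-1}$ are not the compressions of $\wV\wV^*$ and $(\wV^*\wV)^{-1}$. This is why you need the two monotonicity estimates $I_E\ot SS^*\le P(I_E\ot\wV\wV^*)P|_{E\ot\mathcal{K}}$ and $(PAP|_{\mathrm{ran}P})^{-1}\le PA^{-1}P|_{\mathrm{ran}P}$. You prove both correctly; the Cauchy--Schwarz argument for the second is the standard one. The closing substitution $y=(S^*S)^{-1}z$ is the (ii)$\Rightarrow$(iii) computation of Remark \ref{WWW}, so nothing is missing.

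The two routes are really dual. The equivalence (i)$\Leftrightarrow$(ii) rests on the variational identity $\langle A^{-1}x,x\rangle=\sup_{y\neq 0}|\langle x,y\rangle|^2/\langle Ay,y\rangle$. Your inverse-compression lemma amounts to observing that restricting this supremum to $y\in E\ot\mathcal{K}$ can only decrease it.

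What each route buys:
\begin{itemize}
\item The paper's argument is a two-line restriction, once the equivalence (i)$\Leftrightarrow$(ii) imported from \cite{HS19} is granted.
\item Yours uses only the (ii)$\Leftrightarrow$(iii) part of Remark \ref{WWW}, which the paper actually proves.
\item Yours also makes explicit the bookkeeping the paper leaves implicit: $P_{\mathcal{K}}$ commutes with $\sigma(\mathcal{A})$, so $I_E\ot P_{\mathcal{K}}$ is a legitimate projection onto $E\ot\mathcal{K}$.
\end{itemize}
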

\begin{proof}
	Suppose that $\mathcal{K}$ is an invariant subspace of $(\sigma,T).$ If $(\sigma,T)$ satisfies the conditions $(1)$ and $(2)$ in Theorem \ref{DT2}, then from Remark \ref{WWW}, $$\|(I_E\ot \wV)\zeta+\eta\|^2 \le 2(\|\zeta\|^2 + \|\wV \eta\|^2), \quad \zeta \in E^{\ot 2}\ot \mathcal{H}, \eta \in E\ot \mathcal{H}.$$ This implies that, for $\zeta \in E^{\ot 2}\ot \mathcal{K}$ and  $\eta \in E\ot \mathcal{K}$ we have
	$$\|(I_E\ot \wV)\zeta+\eta\|^2 \le 2(\|\zeta\|^2 + \|\wV \eta\|^2).$$ By Remark \ref{WWW}, we get the desired relation.  
\end{proof}

\begin{proposition}\label{DAS1}
	Assume $(\sigma,T)$ to be a bounded below covariant representation of $E$ on $\mathcal{H}.$ Then $(\sigma,T')$ is expansive  if
	and only if $(\sigma,T)$ is contractive.
\end{proposition}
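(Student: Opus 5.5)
Since $(\sigma,T)$ is bounded below, $\wT^*\wT$ is invertible on $E\ot\mathcal{H}$ and $\wT'=\wT(\wT^*\wT)^{-1}$. The plan is to convert both conditions into operator inequalities for the positive invertible operator $A:=\wT^*\wT$ and then use the order-reversing property of inversion. Here $(\sigma,T)$ contractive means $\|\wT\eta\|\le\|\eta\|$ for all $\eta\in E\ot\mathcal{H}$, and $(\sigma,T')$ expansive means $\|\wT'\eta\|\ge\|\eta\|$.

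First compute
\[
\wT'^*\wT'=(\wT^*\wT)^{-1}\wT^*\wT(\wT^*\wT)^{-1}=(\wT^*\wT)^{-1}=A^{-1},
\]
which agrees with Proposition \ref{R1}(2), since there $\wT'^*\wT'=(\wT^*\wT)'$ and the Cauchy dual of an invertible positive operator is its inverse. Hence $\|\wT'\eta\|^2=\langle A^{-1}\eta,\eta\rangle$, so $(\sigma,T')$ is expansive if and only if $A^{-1}\ge I_{E\ot\mathcal{H}}$. Likewise $(\sigma,T)$ is contractive if and only if $A\le I_{E\ot\mathcal{H}}$.

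It then remains to show that, for a positive invertible operator $A$, we have $A\le I$ if and only if $A^{-1}\ge I$. One way is continuous functional calculus: $A\le I$ means $\sigma(A)\subseteq(0,1]$, which is equivalent to $\sigma(A^{-1})\subseteq[1,\infty)$, i.e.\ $A^{-1}\ge I$. Alternatively, conjugating $A\le I$ by $A^{-1/2}$ gives $I\le A^{-1}$, and conjugating back by $A^{1/2}$ gives the converse. Combining these equivalences proves the proposition.

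The main technical point is only to make sure that the expression $\wT'^*\wT'=A^{-1}$ holds on all of $E\ot\mathcal{H}$. This is guaranteed by bounded belowness, which makes $A$ invertible rather than merely Moore--Penrose invertible. With that in place, the argument is routine.
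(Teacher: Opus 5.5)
Your proposal is correct and follows essentially the same route as the paper: compute $\wt{T}'^*\wt{T}'=(\wt{T}^*\wt{T})^{-1}$ from bounded belowness, then use that $\wt{T}^*\wt{T}\le I$ if and only if $(\wt{T}^*\wt{T})^{-1}\ge I$. The only difference is that you justify this order-reversal step, via functional calculus or conjugation by $A^{\mp 1/2}$, which the paper leaves unstated.
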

\begin{proof}
If $(\sigma,T)$ is a bounded below, then $\wV'^*\wV'=(\wV^*\wV)^{-1}.$ Since $\wt{T}^*\wt{T}\le I$ if and only if $(\wt{T}^*\wt{T})^{-1}\ge I,$ the proposition follows.
\end{proof}

\begin{theorem}\label{DT6}
	Assume $(\sigma,T)$ to be a c.b.c-representation of $E$ on  $\mathcal{H}.$ If $(\sigma,T)$ satisfies the conditions $(1)$ and $(2)$ in Theorem \ref{DT2}, then $(\sigma,T)$ is  contractive.
\end{theorem}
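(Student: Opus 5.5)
The plan is to combine the operator inequality from Remark \ref{WWW}(ii) with a spectral-norm argument, as in the classical one-variable case. By Remark \ref{WWW}, conditions $(1)$ and $(2)$ of Theorem \ref{DT2} are equivalent to
\[
(I_E\otimes \wT\wT^*)+(\wT^*\wT)^{-1}\le 2I_{E\otimes\mathcal H},
\]
with $A:=\wT^*\wT$ invertible on $E\otimes\mathcal H$. Write $a:=\|A\|=\|\wT\|^2$. Contractivity of $(\sigma,T)$ is exactly $a\le 1$, so it suffices to show $a\le 1$. We may assume $a\ge 1$, since otherwise there is nothing to prove.

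The key steps are as follows.

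\emph{First}, since $A$ is positive with $A\le aI$, we get $A^{-1}\ge a^{-1}I$. Hence $\langle A^{-1}z,z\rangle\ge a^{-1}\|z\|^2$ for every $z\in E\otimes\mathcal H$.

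\emph{Second}, we need unit vectors $z_k\in E\otimes\mathcal H$ with $\langle (I_E\otimes\wT\wT^*)z_k,z_k\rangle\to a$. Note that $I_E\otimes\wT\wT^*=(I_E\otimes\wT)(I_E\otimes\wT)^*$ is positive, so its norm equals the supremum of its numerical range. Its norm is $\|I_E\otimes\wT\|^2$, and $\|I_E\otimes \wT\|=\|\wT\|$. Therefore $\|I_E\otimes\wT\wT^*\|=\|\wT\wT^*\|=\|\wT^*\wT\|=a$, and such $z_k$ exist.

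\emph{Third}, pair the displayed inequality with $z_k$. This gives
\[
\langle (I_E\otimes\wT\wT^*)z_k,z_k\rangle + a^{-1}\le 2.
\]
Letting $k\to\infty$ yields $a+a^{-1}\le 2$. Since $a+a^{-1}\ge 2$ with equality only at $a=1$, this forces $a=1$, i.e.\ $\wT^*\wT\le I$. By Lemma \ref{MSC}, $(\sigma,T)$ is then contractive (indeed completely contractive).

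The main obstacle is the second step, namely the identity $\|I_E\otimes\wT\|=\|\wT\|$. The inequality $\le$ is standard. The reverse inequality needs that the amplification $X\mapsto I_E\otimes X$, restricted to the commutant $\sigma(\mathcal A)'$ (which contains $\wT\wT^*$ by the covariance relation), is isometric. I would first try to verify this directly: choose elementary tensors $\xi\otimes h$ with $\langle\xi,\xi\rangle$ acting nearly as a unit on $h$, using nondegeneracy of $\varphi$ and an approximate unit of $\mathcal A$. If that fails, I would argue through condition (iii) of Remark \ref{WWW} instead, testing it on suitable $y$ built from approximate maximizing vectors of $A$.
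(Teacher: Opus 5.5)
\textbf{The gap: the norm identity in your second step.} You have identified the right crux, but that step cannot be proved from the stated hypotheses, and nondegeneracy of $\varphi$ is the wrong tool. Since $\|\xi\otimes h\|^2=\langle h,\sigma(\langle\xi,\xi\rangle)h\rangle$, the $*$-homomorphism $X\mapsto I_E\otimes X$ on $\sigma(\mathcal A)'$ only sees the restriction of $X$ to the reducing subspace $[\sigma(J)\mathcal H]$, where $J=\overline{\mathrm{span}}\,\langle E,E\rangle$. Its kernel is $\{X:\ X\sigma(J)=0\}$, and the left action plays no role. So $\|I_E\otimes\wT\wT^*\|=\|\wT\wT^*\|$ is guaranteed when $[\sigma(J)\mathcal H]=\mathcal H$. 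For instance, if $E$ is full and $\sigma$ is nondegenerate, the amplification is faithful and hence isometric. It fails in general.

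\textbf{The statement itself fails without such a hypothesis.} So no fallback through condition (iii) can rescue the argument. Take the following data.
\begin{itemize}
\item $\mathcal A=\mathbb C^2$ with minimal projections $p_1,p_2$.
\item $E=\mathbb Ce\oplus\mathbb Cf$ with $\langle e,e\rangle=\langle f,f\rangle=p_1$, $\langle e,f\rangle=0$, $\varphi(p_1)e=e$, $\varphi(p_2)f=f$. Then $\varphi$ is unital and injective.
\item $\mathcal H=\mathbb C^2$ with $\sigma(a)=\mathrm{diag}(a_1,a_2)$.
\item $T(e)\epsilon_1=\epsilon_1$, $T(f)\epsilon_1=2\epsilon_2$, $T(e)\epsilon_2=T(f)\epsilon_2=0$.
\end{itemize}
Then $e\otimes\epsilon_2=f\otimes\epsilon_2=0$, so $E\otimes_\sigma\mathcal H$ has orthonormal basis $u=e\otimes\epsilon_1$, $v=f\otimes\epsilon_1$. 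With respect to these bases $\wT=\mathrm{diag}(1,2)$ and $\wT\wT^*=\mathrm{diag}(1,4)$, but $I_E\otimes\wT\wT^*=I$. Hence $\|\wT_2^*\wT\eta\|=\|\wT^*\wT\eta\|$, and condition (1) of Theorem \ref{DT2} reduces to $\|\wT\eta\|^2\le\|\wT^*\wT\eta\|^2$. This holds because $\wT^*\wT=\mathrm{diag}(1,4)\ge I$. Equivalently, (ii) of Remark \ref{WWW} reads $I+\mathrm{diag}(1,\tfrac14)\le 2I$. So conditions (1) and (2) hold, yet $\|\wT\|=2$.

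\textbf{The paper's proof has the same hidden issue.} In this example the Cauchy dual $\wT'=\mathrm{diag}(1,\tfrac12)$ is concave, because $I_E\otimes\wT'^*\wT'=I$ on $E^{\otimes 2}\otimes\mathcal H$ (which is spanned by $e\otimes u$ and $f\otimes u$). But $\wT'$ is not expansive, so the quoted ``concave implies expansive'' step is where the paper's proof breaks.

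\textbf{Comparison once the hypothesis is added.} Assume, say, $[\sigma(\langle E,E\rangle)\mathcal H]=\mathcal H$. Then your proof is complete and takes a genuinely different route. The paper conjugates (ii) by $I_E\otimes\wT'$ to show the Cauchy dual is concave, then uses concave $\Rightarrow$ expansive and Proposition \ref{DAS1}. You read $a+a^{-1}\le 2$ directly off the numerical range of (ii). This avoids Cauchy duals and the concavity lemma, and it gives the sharper conclusion $\|\wT\|=1$ whenever $E\otimes_\sigma\mathcal H\neq\{0\}$.
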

\begin{proof}
	From Remark \ref{WWW} we get 
	\begin{equation}\label{C4}
		(I_E \otimes \wt{T}\wt{T}^*)+(\wt{T}^*\wt{T})^{-1} \leq 2I_{E \otimes \mathcal{H}}.
	\end{equation} Since $\wV'^*=(\wt{T}^*\wt{T})^{-1}\wV^*,$  $\wV'^*\wV=I_{E\ot \mathcal{H}}$ and $\wV'^*\wV'=(\wV^*\wV)^{-1}.$ 	Now we multiply by left $(I_E \ot \wV'^*)$ and right $(I_E \ot \wV')$ in (\ref{C4}), we get $$I_{E^{\otimes 2} \otimes \mathcal{H}}+\wV'^*_2\wV'_2\leq 2 (I_{E} \otimes \wt{T}'^*\wt{T}').$$ This shows that the Cauchy dual $(\sigma,T')$ of $(\sigma,T)$ is concave. By \cite[Lemma 2.2]{HS19}, $(\sigma,T')$ is expansive. From Proposition \ref{DAS1} $(\sigma,T)$ is contractive.
\end{proof}

Suppose that $(\sigma,T)$ is a c.b.c-representation of $E$ on  $\mathcal{H}$ and $\mathcal{K}$ is an invariant subspace of $(\sigma,T).$ If $$\mathcal{K}=\bigvee_{n\geq0}\wT_n(E^{\ot n}\ot \mathcal{W}),$$ where $\mathcal{W}=\mathcal{K} \ominus \wt{T}(E \otimes \mathcal{K}).$ Then we say that the {\it Beurling-type theorem} holds for $(\sigma,T).$

\begin{theorem}\label{DT7}
		Assume $(\sigma,T)$ to be a c.b.c-representation of $E$ on  $\mathcal{H}.$ If $(\sigma,T)$ satisfies the conditions $(1)$, $(2)$ and $(4)$ in Theorem \ref{DT2}. Then the Beurling-type theorem holds for $(\sigma,T).$
\end{theorem}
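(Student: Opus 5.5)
The plan is to reduce the statement to Theorem \ref{DT2} applied to the restricted representation $(\sigma,T)|_{\mathcal{K}}$. Fix a nonzero invariant subspace $\mathcal{K}$ of $(\sigma,T)$, write $\wT_{\mathcal{K}}:=\wT|_{E\ot\mathcal{K}}:E\ot\mathcal{K}\to\mathcal{K}$, and note that $(\sigma,T)|_{\mathcal{K}}$ is again a c.b.c-representation of $E$ on $\mathcal{K}$. Its induced operators are $(\wT_{\mathcal{K}})_n=\wT_n|_{E^{\ot n}\ot\mathcal{K}}$. Once Theorem \ref{DT2} applies, $(\sigma,T)|_{\mathcal{K}}$ has the GWS-property with wandering subspace $\mathcal{K}\ominus\wT_{\mathcal{K}}(E\ot\mathcal{K})=\mathcal{K}\ominus\wT(E\ot\mathcal{K})=\mathcal{W}$. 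That gives exactly $\mathcal{K}=\bigvee_{n\ge0}\wT_n(E^{\ot n}\ot\mathcal{W})$. So it suffices to verify the four hypotheses of Theorem \ref{DT2} for $\wT_{\mathcal{K}}$.

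\emph{Condition (1).} The adjoints on $\mathcal{K}$ are $\wT_{\mathcal{K}}^*=P_{E\ot\mathcal{K}}\wT^*|_{\mathcal{K}}$ and $(\wT_{\mathcal{K}})_2^*=(\wT(I_E\ot\wT_{\mathcal{K}}))^*$. With these identifications, the inequality of Proposition \ref{DT5} reads
\[\|\wT_{\mathcal{K}}\eta\|^2+\|(\wT_{\mathcal{K}})_2^*\wT_{\mathcal{K}}\eta\|^2\le 2\|\wT_{\mathcal{K}}^*\wT_{\mathcal{K}}\eta\|^2,\qquad \eta\in E\ot\mathcal{K},\]
which is condition (1) for the restriction.

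\emph{Condition (2).} Bounded below passes directly to restrictions, since $c\|\eta\|\le\|\wT\eta\|$ holds in particular for $\eta\in E\ot\mathcal{K}$.

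\emph{Condition (3).} Theorem \ref{DT6} shows that conditions (1) and (2) force $(\sigma,T)$ to be contractive. Hence its restriction is contractive as well; in fact Theorem \ref{DT6} applied to the restriction, using (1) and (2) just established, gives this directly.

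\emph{Condition (4).} This is the main obstacle: we need $\|(\wT_{\mathcal{K}})_n^*k\|\to0$ for $k\in\mathcal{K}$. Here $(\wT_{\mathcal{K}})_n^*k=P_{E^{\ot n}\ot\mathcal{K}}\wT_n^*k$, since $(\wT_{\mathcal{K}})_n$ is the restriction of $\wT_n$ to $E^{\ot n}\ot\mathcal{K}$ and its adjoint is the compression. Because $P_{E^{\ot n}\ot\mathcal{K}}=I_{E^{\ot n}}\ot P_{\mathcal{K}}$ is a contraction, we get $\|(\wT_{\mathcal{K}})_n^*k\|\le\|\wT_n^*k\|\to0$ by hypothesis (4) for $(\sigma,T)$.

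The point I would check carefully is the identification $\wT_{\mathcal{K}}^*=P_{E\ot\mathcal{K}}\wT^*|_{\mathcal{K}}$. For it, and for $P_{E^{\ot n}\ot\mathcal{K}}=I_{E^{\ot n}}\ot P_{\mathcal{K}}$ to make sense, $P_{\mathcal{K}}$ must commute with $\sigma(\mathcal{A})$. This holds because $\mathcal{K}$ is $\sigma(\mathcal{A})$-invariant and $\sigma$ is a $*$-representation, so $\mathcal{K}$ is in fact $\sigma(\mathcal{A})$-reducing. Consequently $E\ot\mathcal{K}$ is a closed subspace of $E\ot\mathcal{H}$ with projection $I_E\ot P_{\mathcal{K}}$.

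Finally, Theorem \ref{DT2} applied to $(\sigma,T)|_{\mathcal{K}}$ yields the GWS-property, and the Beurling-type theorem follows. If $\mathcal{K}=\{0\}$ the statement is trivial.
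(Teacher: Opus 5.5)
Your proposal is correct and follows essentially the same route as the paper. Both restrict to $\mathcal{K}$, obtain (1) from Proposition \ref{DT5}, (2) directly, (3) from Theorem \ref{DT6}, and (4) because $(\wT|_{E\ot\mathcal{K}})_n^*k$ is the compression of $\wT_n^*k$ (the paper phrases this via $\wT^*(\mathcal{K}^{\perp})\subseteq E\ot\mathcal{K}^{\perp}$), and then apply Theorem \ref{DT2}; your explicit check that $\mathcal{K}$ is $\sigma(\mathcal{A})$-reducing, so $E\ot\mathcal{K}$ has projection $I_E\ot P_{\mathcal{K}}$, is a detail the paper leaves implicit.
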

\begin{proof}
	Let $(\sigma,T)$ be a bounded below and $\mathcal{K}$ be a $(\sigma,T)$-invariant subspace, then $(\sigma,T)|_{\mathcal{K}}$ is also bounded below.  From Proposition \ref{DT5}, we have $$\|\wV \eta\|^2 +\| (\wV(I_E\ot \wV|_{E\ot \mathcal{K}}))^*\wV \eta\|^2\le 2\|(\wV |_{E\ot \mathcal{K}})^*\wV \eta\|, \quad \eta \in E\ot \mathcal{K}.$$ From Theorem \ref{DT6}, $(\sigma,T)$ is contractive, then $(\sigma,T)|_{\mathcal{K}}$ is also contractive. Since $\wV^*(\mathcal{K}^{\perp})\subseteq E\ot \mathcal{K}^{\perp}$, from (\ref{C2})  $\|(\wV|_{E\ot \mathcal{K}})^*_n h\|\to 0$ as $n\to \infty$ for all $h\in \mathcal{K}$. Therefore  $(\sigma,T)|_{\mathcal{K}}$ satisfies conditions $(1$-$4)$ of Theorem \ref{DT2}, and hence $\mathcal{K}=[\mathcal{K} \ominus \wt{T}(E \otimes \mathcal{K})]_{\wV}.$
\end{proof}

\begin{theorem}
		Assume $(\sigma,T)$ to be a c.b.c-representation of $E$ on  $\mathcal{H},$ then the following conditions are equivalent:
	\begin{enumerate}
		\item[(i)] $(\sigma, T)$ is analytic and satisfies condition $(b)$ in Theorem \ref{DT1}.
		\item[(ii)] $(\sigma,T)$ satisfies the conditions $(\ref{C1})$, $(\ref{C3})$ and $(\ref{C2})$ in Theorem \ref{DT2}.
	\end{enumerate}
\end{theorem}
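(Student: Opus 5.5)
We prove the two implications separately. By Remark \ref{WWW}, condition $(b)$ of Theorem \ref{DT1} is equivalent to conditions (\ref{C1}) and (\ref{C3}) of Theorem \ref{DT2} together. So the only thing to show is that, under $(b)$, analyticity of $(\sigma,T)$ is equivalent to condition (\ref{C2}): $\|\wV_n^*h\|\to 0$ for all $h\in\mathcal H$.

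\emph{(ii)$\Rightarrow$(i).} Assume (\ref{C1}), (\ref{C3}), (\ref{C2}). By Remark \ref{WWW}, condition $(b)$ holds. By Theorem \ref{DT6}, $(\sigma,T)$ is contractive, so (1)--(4) of Theorem \ref{DT2} all hold. Hence $(\sigma,T)$ has the GWS-property:
$$\mathcal H=[\mathcal H\ominus\wV(E\ot\mathcal H)]_{\wV}.$$
Now apply the Wold-type decomposition of Theorem \ref{MT1}$(b)$, with the wandering subspace $\mathcal W=\mathcal H\ominus \wV(E\ot\mathcal H)$ (as in Shimorin's construction). The first summand is already all of $\mathcal H$, so the orthogonal summand $\bigcap_{m\ge0}\wV_m(E^{\ot m}\ot\mathcal H)$ is $\{0\}$. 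Thus $(\sigma,T)$ is analytic.

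A direct check is also available. Let $h\in\mathcal H_\infty:=\bigcap_m \wV_m(E^{\ot m}\ot \mathcal H)$. This subspace is reducing, and the restriction is isometric and co-isometric, so $\wV_n^*$ acts isometrically on it. Hence $\|\wV_n^*h\|=\|h\|$ for every $n$, and (\ref{C2}) forces $h=0$.

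\emph{(i)$\Rightarrow$(ii).} Assume $(\sigma,T)$ is analytic and satisfies $(b)$. Remark \ref{WWW} gives (\ref{C1}) and (\ref{C3}), and Theorem \ref{DT6} gives that $(\sigma,T)$ is contractive. Then $\|\wV_{n+1}^*h\|\le\|\wV_n^*h\|$, so the sequence $\|\wV_n^*h\|$ decreases and converges to some limit $L(h)$. We must show $L(h)=0$.

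\begin{enumerate}
\item By Theorem \ref{MT1}, analyticity gives $\mathcal H=[\mathcal W]_{\wV}$ with $\mathcal W=\mathcal H\ominus\wV(E\ot\mathcal H)$.
\item For $w\in\mathcal W$ and $\xi\in E^{\ot m}$, the vector $x=\wV_m(\xi\ot w)$ satisfies $\wV_n^* x=0$ once $n>m$. Indeed, I would show that $\wV_n^*\wV_m(E^{\ot m}\ot\mathcal W)$ vanishes for $n>m$. This should follow from the wandering property together with $\mathcal W=N(\wV^*)$: since $\wV^*$ annihilates $\mathcal W$, iterating $\wV_n^*=(I_{E^{\ot n-1}}\ot\wV^*)\wV_{n-1}^*$ reduces the question to the computation of $\wV_m^*\wV_m$ on $E^{\ot m}\ot\mathcal W$.
\item The span of such vectors is dense in $\mathcal H$. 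The operators $\wV_n^*$ are uniformly bounded (by $1$, by contractivity), so a standard $\varepsilon/3$ argument shows $\wV_n^*h\to0$ for every $h\in\mathcal H$.
\end{enumerate}

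\emph{Main obstacle.} The delicate step is item 2: showing that $\wV_n^*$ kills $\wV_m(E^{\ot m}\ot\mathcal W)$ for $n>m$. Orthogonality $\mathcal W\perp\wV_k(E^{\ot k}\ot\mathcal W)$ alone does not give this when $\wV$ is not isometric, because $\wV_m^*\wV_m$ need not preserve $E^{\ot m}\ot\mathcal W$.

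If this fails, the fallback is to argue via the Cauchy dual. By the proof of Theorem \ref{DT6}, $(\sigma,T')$ is concave, and the spaces $\wV'_m(E^{\ot m}\ot\mathcal W)$ are the ones relevant to $\wV^*$, since $\wV'^*\wV=I$. The aim would be to show that the dual wandering subspace generates and that $\wV_n^*$ annihilates $\bigvee_{k<n}\wV'_k(E^{\ot k}\ot\mathcal W)$, via $\wV^*\wV'=I$. One would then proceed with the same density argument.
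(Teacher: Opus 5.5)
Your (ii)$\Rightarrow$(i) is fine. Your ``direct check'' is in fact shorter than the paper's appeal to Theorem \ref{DT7}: on the unitary part $\bigcap_m\wT_m(E^{\ot m}\ot\mathcal H)$ given by Theorem \ref{MT1} one has $\|\wT_n^*h\|=\|h\|$, so condition (\ref{C2}) forces $h=0$.

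The gap is in (i)$\Rightarrow$(ii): item 2 is false in general, for exactly the reason you suspected. Take $E=\mathbb C$, $\sigma(c)=cI$, and let $T$ be the Bergman shift restricted to $\mathcal M_a=\{f\in A^2(\mathbb D): f(a)=0\}$ with $a\neq0$. The Bergman shift satisfies $TT^*+(T^*T)^{-1}=2I$, so $(b)$ holds on $A^2(\mathbb D)$ and hence on $\mathcal M_a$. The restriction is also analytic. Here $\mathcal W=\mathcal M_a\ominus z\mathcal M_a$ is spanned by $G=1-(1-|a|^2)^2(1-\bar a z)^{-2}$. On the other hand, $(z^2\mathcal M_a)^\perp=\operatorname{span}\{1,z,(1-\bar a z)^{-2}\}$, and this span does not contain $zG$ (compare degrees after multiplying by $(1-\bar a z)^2$). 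Hence $T^{*2}TG\neq 0$, i.e.\ $\wT_2^*$ does not kill $\wT(E\ot\mathcal W)$. The density argument built on the $\wT$-orbit of $\mathcal W$ therefore collapses.

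Your fallback points in a workable direction, since $\wT_n^*\wT'_k=I_{E^{\ot k}}\ot\wT_{n-k}^*$ does annihilate $E^{\ot k}\ot\mathcal W$ for $k<n$. But its key step, that $\mathcal W$ generates $\mathcal H$ under $\wT'$, is only stated as an aim. Concavity of $(\sigma,T')$ together with Theorem \ref{MT1}(a) gives this only if $(\sigma,T')$ is analytic, which you do not know: the ranges of $\wT'_m$ and $\wT_m$ need not agree for $m\ge2$. The step can be proved by duality. If $h\perp[\mathcal W]_{\wT'}$, then $h=\wT\wT'^*h$ with $\wT'^*h\in E\ot(\mathcal H\ominus[\mathcal W]_{\wT'})$, and iterating puts $h$ in $\bigcap_m\wT_m(E^{\ot m}\ot\mathcal H)=\{0\}$.

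None of this is needed, though. The missing idea is to use the dense set $\bigcup_m N(\wT_m^*)=\bigcup_m\bigl(\wT_m(E^{\ot m}\ot\mathcal H)\bigr)^\perp$ instead of the orbit of $\mathcal W$. The ranges are closed (by bounded below) and decrease to $\{0\}$, so $P_{R(\wT_n)}\to0$ strongly. Then $\|\wT_n^*h\|=\|\wT_n^*P_{R(\wT_n)}h\|\le\|P_{R(\wT_n)}h\|\to0$, using contractivity from Theorem \ref{DT6}. This is what the paper does via $B_n=\wT_n(E^{\ot n}\ot\mathcal H)\ominus\wT_{n+1}(E^{\ot n+1}\ot\mathcal H)$ and $\mathcal H=\bigoplus_n B_n$.
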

\begin{proof}
	From Remark \ref{WWW}, conditions $(\ref{C1})$ and $(\ref{C3})$ are equivalent to $(b)$. By Theorem \ref{DT6}, we have $(\sigma, T)$ is contractive.
	
$(ii)\Rightarrow (i).$ Suppose that $(\sigma,T)$ satisfies the conditions $(\ref{C1})$, $(\ref{C3})$ and $(\ref{C2}).$ Let $\mathcal{K}=\bigcap_{n\ge 0}\wT_{n}(E^{\ot n}\ot \mathcal{H}),$ then $\mathcal{K}$ is $(\sigma, T)$-invariant and $\wT(E\ot \mathcal{K})=\mathcal{K}.$ From Corollary \ref{DT7}, $$\mathcal{K}=[\mathcal{K} \ominus \wt{T}(E \otimes \mathcal{K})]_{\wV}=\{0\}.$$ Thus $(\sigma, T)$ is analytic. 
	
$(i)\Rightarrow (ii).$ Suppose that $(\sigma, T)$ is analytic and condition $(b)$ holds. Define $$B_n:=\wT_{n}(E^{\ot n}\ot \mathcal{H})\ominus \wT_{n+1}(E^{\ot n+1}\ot \mathcal{H}).$$ Since $(\sigma, T)$ is bounded below, $B_n$ is closed subspace of $\mathcal{H}.$ If $(\sigma, T)$ is analytic, then $\mathcal{H}=\bigoplus_{n=0}^{\infty} B_n$ and $\wT_{k}(E^{\ot k}\ot \mathcal{H})=\bigoplus_{n=k}^{\infty} B_n.$ Let $h\in \mathcal{H},$ then $h=\bigoplus_{n=0}^{\infty} h_n \in \bigoplus_{n=0}^{\infty} B_n.$ Since $h_k \perp \wT_{k+1}(E^{\ot k+1}\ot \mathcal{H}),$ $\wT^* h_k \perp (I_{E}\ot \wT_{k})(E^{\ot k+1}\ot \mathcal{H}).$ Therefore $\wT^* h_k\in E\ot (\bigoplus_{n=0}^{k-1} B_n).$ It gives $\wT_{k+1}^*h_k=0,$ and hence
	\begin{align*}
		\|\wT_{k+1}^* h\|=\|\wT_{k+1}^* (\bigoplus_{n=k+1}^{\infty} h_n )\| \le \|\bigoplus_{n=k+1}^{\infty} h_n\| \to 0 ~~~~\mbox{as}~~~k \to \infty.
	\end{align*}  Thus condition $(\ref{C2})$ holds.
\end{proof}

\section{Wold-type decomposition for closed range covariant representations}

Using operator inequalities in this section, we study a Wold-type decomposition of covariant representations of $C^*$-correspondences. We prove a Beurling-type theorem showing that every invariant subspace is uniquely determined by its wandering subspace.

The following result is a generalization of Olofsson \cite[Proposition 1.2]{AO05} and \cite[Theorem 2]{EMZ2015}.

\begin{theorem}\label{X2}
 	Assume $(\sigma,T)$ to be a c.b.c-representation of $E$ on $\mathcal{H}$ with closed range. Suppose $n$ is a positive constant. Then the following conditions are equivalent.
	\begin{enumerate}
		\item  \label{Y7} $\|\wV_2'\zeta\|^2 -\|(I_E\ot \wV')\zeta\|^2 \le n(\|(I_E\ot \wV')\zeta\|^2 - \|(I_E\ot \wV^{\dagger}\wV)\zeta\|^2),$  $\zeta \in E^{\ot 2}\ot \mathcal{H}.$
		\item \label{Y6} $\|P((I_E\ot \wV)\zeta+\wV^{\dagger}\wV \eta)\|^2 \le (1+ \frac{1}{n})(\|\zeta\|^2+n\|\wV \eta\|^2),$  $\zeta \in E^{\ot 2}\ot \mathcal{H}$ and $\eta\in E\ot \mathcal{H},$
	\end{enumerate} where $(\sigma,T')$ is the cauchy dual of $(\sigma,T)$ and $P$ is the orthogonal projection of $E\ot \mathcal{H}$ onto $R(I_E\ot \wV).$
\end{theorem}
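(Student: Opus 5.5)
The plan is to recast condition (\ref{Y6}) as a positivity statement for an operator on $R(I_E\ot \wV)$, and then pull it back through $I_E\ot \wV'$ to obtain condition (\ref{Y7}).

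\textbf{Step 1: a general lemma.} I would first isolate the following elementary fact. Let $B,C,D$ be bounded operators with $D$ of closed range and $R(C^*)\subseteq R(D^*)$. Then
\[
\|Bx+Cy\|^2\le c\bigl(\|x\|^2+\|Dy\|^2\bigr)\quad\text{for all } x,y
\]
holds if and only if
\[
BB^*+C(D^*D)^\dagger C^*\le c\,I .
\]
To prove it, write $y=y_0+y_1$ with $y_0\in N(D)$ and $y_1\in N(D)^\perp$. The range hypothesis gives $Cy_0=0$, so we may assume $y\in N(D)^\perp$. On that subspace put $u=Dy$; then $y=D^\dagger u$ and $\|Dy\|=\|u\|$. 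The inequality becomes the statement that the row operator $[B\;\;CD^\dagger]$ has norm at most $\sqrt c$. This is equivalent to
\[
BB^*+CD^\dagger D^{\dagger *}C^*\le c\,I,
\]
and $D^\dagger D^{\dagger *}=(D^*D)^\dagger$.

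\textbf{Step 2: apply the lemma to (\ref{Y6}).} Since $\wV$ has closed range, so does $I_E\ot\wV$, and $P$ is a genuine orthogonal projection. Take
\[
B=P(I_E\ot\wV),\qquad C=P\,\wV^\dagger\wV,\qquad D=\sqrt n\,\wV,\qquad c=1+\tfrac1n .
\]
The range condition holds because $R(C^*)\subseteq R(\wV^\dagger\wV)=N(\wV)^\perp=R(\wV^*)$. By Proposition \ref{R1}(1),(2) we have $(\wV^*\wV)^\dagger=(\wV^*\wV)'^{\,}=\wV'^*\wV'$, and $\wV'\wV^\dagger\wV=\wV'$ since $R(\wV^\dagger\wV)=N(\wV)^\perp$ and $\wV'$ vanishes on $N(\wV)$. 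Hence (\ref{Y6}) is equivalent to
\[
P\Bigl(I_E\ot\wV\wV^*+\tfrac1n\,\wV'^*\wV'\Bigr)P\le\Bigl(1+\tfrac1n\Bigr)P .
\]

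\textbf{Step 3: parametrize $R(P)$ and compare quadratic forms.} Since $R(\wV')=R(\wV^{\dagger*})=R(\wV)$, every vector in $R(P)=R(I_E\ot\wV)$ has the form $y=(I_E\ot\wV')\zeta$ with $\zeta\in E^{\ot2}\ot\mathcal H$. Evaluating the forms at such $y$ and using $\wV^*\wV'=P_{N(\wV)^\perp}=\wV^\dagger\wV$ from Proposition \ref{R1}(4):
\begin{itemize}
\item the first term gives $\|(I_E\ot\wV^*\wV')\zeta\|^2=\|(I_E\ot\wV^\dagger\wV)\zeta\|^2$;
\item the second term gives $\tfrac1n\|\wV'(I_E\ot\wV')\zeta\|^2=\tfrac1n\|\wV_2'\zeta\|^2$;
\item the right-hand side gives $(1+\tfrac1n)\|(I_E\ot\wV')\zeta\|^2$.
\end{itemize}
Multiplying by $n$ and rearranging yields exactly (\ref{Y7}). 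Every step is an equivalence, which gives the theorem.

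\textbf{Main obstacle.} The delicate part is Step 1 in this degenerate setting. Here $D=\sqrt n\,\wV$ need not be injective and the projection $P$ is present, so care is needed. One must check that $C$ kills $N(D)$, which is why $\wV^\dagger\wV$ rather than $I$ appears in (\ref{Y6}). One must also handle the Moore--Penrose identities on $E\ot\mathcal H$ versus $E^{\ot 2}\ot\mathcal H$ correctly, in particular that $(I_E\ot\wV)^\dagger=I_E\ot\wV^\dagger$ and that $I_E\ot\wV$ has closed range. I would verify these identities first, directly from the four defining Moore--Penrose equations.
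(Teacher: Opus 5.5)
Your proof is correct and is essentially the paper's argument. The paper also identifies (2) with a norm bound on the row operator $P\bigl(\sqrt{n}(I_E\ot\wT),\ \wT^{\dagger}\bigr)$, which is the adjoint of the column operator $\xi\mapsto(\sqrt{n}(I_E\ot\wT^*)\xi,\wT'\xi)$ on $R(I_E\ot\wT)$. It then passes through $XX^*\le c\,I$ to the same operator inequality you obtain, and pulls that back via $I_E\ot\wT'$ using $\wT^*\wT'=\wT^{\dagger}\wT$ and $R(I_E\ot\wT')=R(I_E\ot\wT)$. Your Step 1 lemma only packages the paper's two separate implications into one chain of equivalences, and it makes explicit the reduction to $\eta\in N(\wT)^{\perp}$ (equivalently $h\in R(\wT)$) that the paper uses tacitly.
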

\begin{proof} $(1)\Rightarrow (2).$
	Put $(I_E\ot \wV')\zeta=\xi$ for $\zeta \in E^{\ot 2}\ot \mathcal{H}.$ Since $(I_E\ot \wV^*)\xi=(I_E \ot \wV^{*}\wV')\zeta=(I_E \ot \wV^{\dagger}\wV)\zeta,$ from $(\ref{Y7})$ we get
	\begin{equation}\label{Y4}
		\|\wV'\xi\|^2 + n\|(I_E\ot \wV^*)\xi\|^2 \le (n+1)\|\xi\|^2 \quad \mbox{for all } ~~\xi\in R(I_E\ot \wV).
	\end{equation}
	Define an operator $X: R(I_E\ot \wV)\to (E^{\ot 2}\ot \mathcal{H}) \oplus  \mathcal{H}$  by $$X(\xi):=(\sqrt{n}(I_E\ot \wV^*)\xi,\wV'\xi)~~~~~\mbox{for all}~~~\xi\in R(I_E\ot \wV).$$
	From Equation (\ref{Y4}), $\|X\| \le \sqrt{n+1}$ and passing to the adjoint operator
	we see that (\ref{Y4}) is equivalent to the operator $$P(\sqrt{n}(I_E\ot \wV), \wV^{\dagger}): (E^{\ot 2}\ot \mathcal{H}) \oplus  \mathcal{H}\to R(I_E\ot \wV),$$ is bounded and $\|P(\sqrt{n}(I_E\ot \wV), \wV^{\dagger})\| \le \sqrt{n+1},$ where $P$ is the orthogonal projection of $E\ot \mathcal{H}$ onto $R(I_E\ot \wV).$ It follows that   
	\begin{equation}\label{Y5}
		\|P(\sqrt{n}(I_E\ot \wV)\zeta + \wV^{\dagger}h)\|^2 \le (n+1)(\|\zeta\|^2 +\|h\|^2), \quad \zeta \in E^{\ot 2}\ot \mathcal{H}, h\in \mathcal{H}.
	\end{equation}
		Let $\eta\in E\ot \mathcal{H},$ substituting ${\zeta}/{\sqrt{n}}$ for $\zeta$ and $\wV \eta$ for $h$ in (\ref{Y5}), we obtain $$\|P((I_E\ot \wV)\zeta+\wV^{\dagger}\wV \eta)\|^2 \le (1+ \frac{1}{n})(\|\zeta\|^2+n\|\wV \eta\|^2).$$
 $(2)\Rightarrow (1).$ Suppose (\ref{Y6}) holds. Since $P=(I_E\ot \wV \wV^{\dagger}),$ we obtain $$\|(I_E\ot \wV)\zeta + (I_E\ot \wV \wV^{\dagger})\wV^{\dagger}\wV \eta\|^2 \le (1+ \frac{1}{n})(\|\zeta\|^2+n\|\wV \eta\|^2).$$
	For $h=\sqrt{n}\wV \eta,$ we have $$\|(I_E\ot \wV)\zeta + \frac{1}{\sqrt{n}}(I_E\ot \wV \wV^{\dagger})\wV^{\dagger}h\|^2 \le (1+ \frac{1}{n})(\|\zeta\|^2+\|h\|^2),$$ for every $\zeta \in E^{\ot 2}\ot \mathcal{H} , h\in R(\wV).$ Define an operator $X: (E^{\ot 2}\ot \mathcal{H}) \oplus  \mathcal{H}\to E\ot \mathcal{H}$ by $$X(\zeta,h)=(I_E\ot \wV)\zeta + \frac{1}{\sqrt{n}}(I_E\ot \wV \wV^{\dagger})\wV^{\dagger}h.$$ Clearly $\|X\|\le \sqrt{1+\frac{1}{n}}$ and thus, $XX^*\le ({1+\frac{1}{n}})I_{E\ot \mathcal{H}},$ which yields
	\begin{equation}\label{Y8}
		(I_E \ot \wV\wV^*)+ \frac{1}{n}(I_E \ot \wV\wV^{\dagger})\wV^{\dagger}\wV' (I_E \ot \wV\wV^{\dagger}) \le ({1+\frac{1}{n}})I_{E\ot \mathcal{H}}
	\end{equation}
	Now we multiply by left $n(I_E \ot \wV^{\dagger})$ and right $(I_E \ot \wV')$ in (\ref{Y8}), since $\wV^*\wV'=\wV^{\dagger}\wV$ and $\wV\wV^{\dagger}\wV'=\wV',$ we get $$n(I_E \ot \wV^{\dagger}\wV)+ (I_E \ot \wV^{\dagger})\wV^{\dagger}\wV_2' \le (n+1)(I_E \ot \wV^{\dagger}\wV'),$$ and hence, for $\zeta \in E^{\ot 2}\ot \mathcal{H},$
	$\|\wV_2'\zeta\|^2 -\|(I_E\ot \wV')\zeta\|^2 \le n(\|(I_E\ot \wV')\zeta\|^2 - \|(I_E\ot \wV^{\dagger}\wV)\zeta\|^2).$
\end{proof}

Suppose that $(\sigma,T)$ is a c.b.c-representation of $E$ on $\mathcal{H}$ with $\wV$ is left invertible, then $$\wT^{\dagger}\wT=\wt{T}'{^*}\wT=(\widetilde{T}^*\widetilde{T})^{-1}\wT^*\wT=I$$

The following result is an analogue of Olofsson’s \cite[Proposition 1.2]{AO05}.

\begin{remark}
	Assume $(\sigma,T)$ to be a bounded below covariant representation of $E$ on $\mathcal{H}.$ Let $(\sigma,T')$ be the cauchy dual of $(\sigma,T)$ and $n$ be a positive constant. Then the following conditions are equivalent.
	\begin{enumerate}
		\item  \label{Y7} $\|\wV_2'\zeta\|^2 -\|(I_E\ot \wV')\zeta\|^2 \le n(\|(I_E\ot \wV')\zeta\|^2 - \|\zeta\|^2),$ for every $\zeta \in E^{\ot 2}\ot \mathcal{H};$
		\item \label{Y6} $\|P((I_E\ot \wV)\zeta+ \eta)\|^2 \le (1+ \frac{1}{n})(\|\zeta\|^2+n\|\wV \eta\|^2),$ for every $\zeta \in E^{\ot 2}\ot \mathcal{H} , \eta\in E\ot \mathcal{H},$
	\end{enumerate} where $P$ is the orthogonal projection of $E\ot \mathcal{H}$ onto $R(I_E\ot \wV).$
\end{remark}

\begin{theorem}
		Assume $(\sigma,T)$ to be a c.b.c-representation of $E$ on $\mathcal{H}$ with closed range. Let $n$ be a positive constant. Then the following conditions are equivalent.
	\begin{enumerate}
		\item  \label{X6} $\| \wV_2^*h\|^2+n\|\wV\wV^{\dagger}h\|^2\le (n+1)\|\wV^*h\|^2,$ \quad $h\in \mathcal{H}.$
		\item  \label{X7} $\| \wV_2^*\wV \eta\|^2+n\|\wV \eta\|^2\le (n+1)\|\wV^*\wV \eta\|^2,$ \quad $\eta \in E\ot \mathcal{H}.$
		\item  \label{X8}  $\|Q((I_E\ot \wV)\zeta+\wV^{\dagger}\wV \eta)\|^2 \le (n+1)(\|\zeta\|^2+\frac{1}{n}\|\wV \eta\|^2),$ \quad $\zeta \in E^{\ot 2}\ot \mathcal{H} , \eta\in E\ot \mathcal{H},$
	\end{enumerate}  where $Q$ is the orthogonal projection of $E\ot \mathcal{H}$ onto $R(\wV^*).$
\end{theorem}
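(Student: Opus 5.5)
The plan is to show (\ref{X6})$\Leftrightarrow$(\ref{X7}) by a direct substitution, and (\ref{X7})$\Leftrightarrow$(\ref{X8}) by the adjoint/operator-norm trick used in Theorem \ref{X2}. Throughout I use that $R(\wV)$ is closed, the identity $\wV_2=\wV(I_E\ot\wV)$ (so $\wV_2^*=(I_E\ot\wV^*)\wV^*$), and the Moore--Penrose facts $\wV\wV^{\dagger}=P_{R(\wV)}$, $\wV^{\dagger}\wV=P_{R(\wV^*)}=Q$, $N(\wV^{\dagger})=N(\wV^*)$ and $\wV'=\wV^{\dagger *}$ (Proposition \ref{R1}).

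For (\ref{X6})$\Leftrightarrow$(\ref{X7}), take $h\in\mathcal H$ and decompose $h=\wV\eta+h'$ with $h'\in N(\wV^*)=R(\wV)^{\perp}$; this is possible because the range is closed. Then $\wV^*h=\wV^*\wV\eta$ and $\wV_2^*h=(I_E\ot\wV^*)\wV^*\wV\eta=\wV_2^*\wV\eta$. Also $\wV\wV^{\dagger}h=P_{R(\wV)}h=\wV\eta$. Hence each term of (\ref{X6}) evaluated at $h$ coincides with the corresponding term of (\ref{X7}) evaluated at $\eta$. Conversely, every $\eta$ arises this way by taking $h=\wV\eta$. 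So the two inequalities are literally the same family.

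For (\ref{X7})$\Leftrightarrow$(\ref{X8}), first note that the vector $\wV^{\dagger}\wV\eta$ is $\wV^{\dagger}h$ with $h=\wV\eta$ ranging over $R(\wV)$. Since $\wV^{\dagger}$ kills $R(\wV)^{\perp}$, we have $\wV^{\dagger}h=\wV^{\dagger}P_{R(\wV)}h$ and $\|P_{R(\wV)}h\|\le\|h\|$. Therefore (\ref{X8}) may equivalently be imposed for all $h\in\mathcal H$. After replacing $h$ by $\sqrt n\,h$, (\ref{X8}) says precisely that the operator
\[
Z:(E^{\ot 2}\ot\mathcal H)\oplus\mathcal H\to R(\wV^*),\qquad Z(\zeta,h)=Q\bigl((I_E\ot\wV)\zeta+\sqrt n\,\wV^{\dagger}h\bigr),
\]
satisfies $\|Z\|^2\le n+1$. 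This is equivalent to $\|Z^*\|^2\le n+1$. Computing the adjoint gives $Z^*\xi=\bigl((I_E\ot\wV^*)\xi,\ \sqrt n\,\wV'\xi\bigr)$ for $\xi\in R(\wV^*)$. So (\ref{X8}) is equivalent to
\[
\|(I_E\ot\wV^*)\xi\|^2+n\|\wV'\xi\|^2\le(n+1)\|\xi\|^2,\qquad \xi\in R(\wV^*).
\]

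Finally, parametrize $R(\wV^*)=\wV^*(R(\wV))=\{\wV^*\wV\eta:\eta\in E\ot\mathcal H\}$, which is valid since the range is closed. For $\xi=\wV^*\wV\eta$ we get $(I_E\ot\wV^*)\xi=\wV_2^*\wV\eta$. By Proposition \ref{R1}(4), $\wV'\wV^*=P_{R(\wV)}$, so $\wV'\xi=P_{R(\wV)}\wV\eta=\wV\eta$. Substituting these identities turns the last display exactly into (\ref{X7}).

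The main obstacle I expect is bookkeeping in the adjoint step. One must check that $Z^*$ really lands as stated, which uses $Q^*=Q$, $(\wV^{\dagger})^*=\wV'$, and that restricting to $\xi\in R(\wV^*)$ loses nothing. One must also justify that the range of $\wV^*\wV$ is all of $R(\wV^*)$. Both points follow from closed range and Proposition \ref{R1}; I would verify them first before assembling the chain of equivalences.
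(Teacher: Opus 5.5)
Your proof is correct and follows essentially the same route as the paper: (1)$\Leftrightarrow$(2) by substitution ($h=\wV\eta$ one way, $\wV\eta=\wV\wV^{\dagger}h$ the other), and the equivalence with (3) by the adjoint/operator-norm trick, using $(\wV^{\dagger})^*=\wV'$ and $\wV'\wV^*=P_{R(\wV)}$. Your version runs this as a single reversible chain through (2), whereas the paper proves (1)$\Rightarrow$(3) and then (3)$\Rightarrow$(1) via $YY^*\le(n+1)I$ compressed by $\wV$ and $\wV^*$; yours also makes explicit the extension of (3) from $h\in R(\wV)$ to all $h\in\mathcal{H}$ via $\wV^{\dagger}=\wV^{\dagger}P_{R(\wV)}$, which the paper uses tacitly when it asserts $\|Y\|\le\sqrt{1+n}$.
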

\begin{proof}
$(1)\Leftrightarrow (2).$ Putting $h=\wV \eta$ in $(1)$ we get $(2).$ Now putting $\eta=\wV^{\dagger}h$ in $(2)$ we get $(1).$
	
	$(1)\Rightarrow (3)$  Let $\eta=\wV^*h,$ then $\wV'\eta=\wV\wV^{\dagger}h,$ from $(1),$ we obtain \begin{equation}\label{DAS2}
		\|(I_E\ot \wV^*)\eta\|^2+n\|\wV'\eta\|^2 \le (n+1)\|\eta\|^2 \quad \mbox{for all}~~~ \eta\in R(\wV^*)=N(\wV)^{\perp}.
	\end{equation}
	Define an operator  $X: N(\wV)^{\perp}\to (E^{\ot 2}\ot \mathcal{H}) \oplus  \mathcal{H}$ by $$X(\eta):=((I_E\ot \wV^*)\eta,\sqrt{n}\wV'\eta)~~~\mbox{for all}~~~\eta\in N(\wV)^{\perp}.$$ From Equation \ref{DAS2}, $\|X\|\le \sqrt{n+1}$ and passing to the adjoint operator
	we see that (\ref{DAS2}) is equivalent to the operator
	$$Q((I_E\ot \wV), \sqrt{n}\wV^{\dagger}): (E^{\ot 2}\ot \mathcal{H}) \oplus  \mathcal{H}\to N(\wV)^{\perp},$$ is bounded  and  $\|Q((I_E\ot \wV), \sqrt{n}\wV^{\dagger})\|\le \sqrt{n+1},$ where $Q$ is the orthogonal projection of $E\ot \mathcal{H}$ onto $R(\wV^*).$ This implies that
	$$\|Q((I_E\ot \wV)\zeta+\sqrt{n}\wV^{\dagger}h)\|^2 \le (1+n)(\|\zeta\|^2+\|h\|^2)\quad \mbox{for all}~~ \zeta \in E^{\ot 2}\ot \mathcal{H}, h\in \mathcal{H}.$$ 
	
	For  $\eta\in E\ot \mathcal{H},$ substituting $\frac{1}{\sqrt{n}}\wV \eta$ for $h$,  we have $$\|Q((I_E\ot \wV)\zeta+\wV^{\dagger}\wV \eta)\|^2 \le (1+ {n})(\|\zeta\|^2+ \frac{1}{n}\|\wV \eta\|^2)\quad \mbox{for all}~~  \zeta \in E^{\ot 2}\ot \mathcal{H}, \eta\in E\ot \mathcal{H}.$$
	$(3)\Rightarrow (1).$ Suppose $(3)$ holds. Since $Q= \wV^{\dagger}\wV$ is the orthogonal projection of $E\ot \mathcal{H}$ onto $R(\wV^*)$ we have $$\| \wV^{\dagger}\wV (I_E\ot \wV)\zeta +\wV^{\dagger}\wV \eta \|^2 \le (1+ {n})(\|\zeta\|^2+ \frac{1}{n}\|\wV \eta\|^2).$$ For $h=\frac{1}{\sqrt{n}}\wV \eta,$ we obtain $$\| \wV^{\dagger}\wV (I_E\ot \wV)\zeta + \sqrt{n}\wV^{\dagger}h\|^2 \le (1+ {n})(\|\zeta\|^2+ \|h\|^2).$$  Define an operator $Y: (E^{\ot 2}\ot \mathcal{H}) \oplus  \mathcal{H}\to E\ot \mathcal{H}$ by $Y(\zeta,h)=\wV^{\dagger}\wV_2 x + \sqrt{n}\wV^{\dagger}h.$ Clearly $\|Y\|\le \sqrt{1+{n}},$ and $YY^*\le ({1+{n}})I_{E\ot \mathcal{H}},$ which yields $$\wV^{\dagger}\wV_2\wV_2^*\wV' + n\wV^{\dagger}\wV' \le (n+1)I_{E\ot \mathcal{H}}$$
	Now we multiply by left $\wV$ and right $\wV^*$ in last inequality and since $\wV\wV^{\dagger}\wV=\wV, \wV'\wV^*=\wV \wV^{\dagger}$ and $\wV^*\wV'\wV^*=\wV^*,$ we get 
	\begin{equation}\label{X9}
		\wV_2\wV_2^* +n \wV\wV^{\dagger} \le (n+1)\wV\wV^*,
\end{equation} and hence, for $h \in\mathcal{H}$ we obtain
$\| \wV_2^*h\|^2+n\|\wV\wV^{\dagger}h\|^2\le (n+1)\|\wV^*h\|^2.$
\end{proof}

\begin{remark}
		Assume $(\sigma,T)$ to be a bounded below covariant representation of $E$ on $\mathcal{H},$ observe that $Q:=\wV^{\dagger}\wV=I_{E\ot \mathcal{H}}.$ Then the following conditions are equivalent:
	\begin{enumerate}
		\item   $\| \wV_2^*h\|^2+n\|\wV\wV^{\dagger}h\|^2\le (n+1)\|\wV^*h\|^2,$ \quad $h\in \mathcal{H}.$
		\item   $\| \wV_2^*\wV \eta\|^2+n\|\wV \eta\|^2\le (n+1)\|\wV^*\wV \eta\|^2,$ \quad $\eta \in E\ot \mathcal{H}.$
		\item    $\|(I_E\ot \wV)\zeta+ \eta\|^2 \le (n+1)(\|\zeta\|^2+\frac{1}{n}\|\wV \eta\|^2),$ \quad $\zeta \in E^{\ot 2}\ot \mathcal{H} , \eta\in E\ot \mathcal{H}.$
	\end{enumerate} 
\end{remark}

\begin{remark}\label{U1}
		Assume $(\sigma,T)$ to be a bounded below covariant representation of $E$ on $\mathcal{H}$ with $n=1.$ Then the following conditions are equivalent:
	\begin{enumerate}
		\item \label{U2}  $\| \wV_2^*h\|^2+\|\wV\wV^{\dagger}h\|^2\le 2\|\wV^*h\|^2,$ \quad  $h\in \mathcal{H}.$
		\item   $\| \wV_2^*\wV \eta\|^2+\|\wV \eta\|^2\le 2\|\wV^*\wV \eta\|^2,$ \quad $\eta \in E\ot \mathcal{H}.$
		\item \label{DAS4}   $\|(I_E\ot \wV)\zeta+ \eta\|^2 \le 2(\|\zeta\|^2+\|\wV \eta\|^2),$ \quad $\zeta \in E^{\ot 2}\ot \mathcal{H} , \eta \in E\ot \mathcal{H}.$
	\end{enumerate} 
\end{remark}

Let $(\sigma,T)$ be a bounded below covariant representation of $E$ on $\mathcal{H}$ and $\mathcal{K}$ be a $(\sigma,T)$-invariant. Since $N(\wV|_{E\ot \mathcal{K}})=N(\wV) \cap E\ot \mathcal{K}=\{0\},$ and $R(\wV|_{E\ot \mathcal{K}})=\wV({E\ot \mathcal{K}}).$ Now we can easily check that $(\wV|_{E\ot \mathcal{K}})_{0}=(\wV_0)|_{E\ot \mathcal{K}}$ (see Equation \ref{DAS3}) and consequently,
$$(\wV|_{E\ot \mathcal{K}})^{\dagger}=(\wV_0|_{E\ot \mathcal{K}})^{-1}P_{\wV({E\ot \mathcal{K}})} \quad and \quad R((\wV|_{E\ot \mathcal{K}})^{\dagger})=E\ot \mathcal{K}.$$

\begin{corollary}\label{U3}
	Let $(\sigma,T)$ be a bounded below covariant representation of $E$ on $\mathcal{H}$ and $(\sigma,T)$ satisfies $\| \wV_2^*h\|^2+\|\wV\wV^{\dagger}h\|^2\le 2\|\wV^*h\|^2$ for $ h\in \mathcal{H}.$ Then for every invariant subspace $\mathcal{K}$ of $(\sigma,T)$ we have $$\| (\wV(I_E\ot \wV|_{E\ot \mathcal{K}}))^*x\|^2+\|\wV(\wV |_{E\ot \mathcal{K}})^{\dagger}x\|^2\le 2\|(\wV |_{E\ot \mathcal{K}})^*x\|^2, \quad x\in \mathcal{K}.$$
\end{corollary}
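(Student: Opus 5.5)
The plan is to follow the route of Proposition \ref{DT5}: pass the hypothesis to the equivalent ``additive'' form of Remark \ref{U1}, restrict that form to $\mathcal{K}$, and convert back.

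\textbf{Step 1.} Since $(\sigma,T)$ is bounded below, Remark \ref{U1} (the equivalence (1)$\Leftrightarrow$(3)) turns the hypothesis $\|\wV_2^*h\|^2+\|\wV\wV^{\dagger}h\|^2\le 2\|\wV^*h\|^2$ into
$$\|(I_E\ot \wV)\zeta+ \eta\|^2 \le 2(\|\zeta\|^2+\|\wV \eta\|^2),\qquad \zeta \in E^{\ot 2}\ot \mathcal{H},\ \eta \in E\ot \mathcal{H}.$$

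\textbf{Step 2.} Restrict to $\mathcal{K}$. Set $S:=\wV|_{E\ot\mathcal{K}}:E\ot\mathcal{K}\to\mathcal{K}$. It is well defined because $\mathcal{K}$ is invariant, and $(\sigma,T)|_{\mathcal{K}}$ is a covariant representation on $\mathcal{K}$ whose associated operator is $S$. Then $I_E\ot S=(I_E\ot\wV)|_{E^{\ot2}\ot\mathcal{K}}$. Taking $\zeta\in E^{\ot 2}\ot\mathcal{K}$ and $\eta\in E\ot\mathcal{K}$ in Step 1 gives
$$\|(I_E\ot S)\zeta+\eta\|^2\le 2(\|\zeta\|^2+\|S\eta\|^2).$$
This is condition (3) of Remark \ref{U1} for the restricted representation. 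That representation is bounded below, since it is the restriction of a bounded below one. Its operator $S$ is left invertible with closed range $\wV(E\ot\mathcal{K})$, and by the computation preceding the corollary $S^\dagger=(\wV_0|_{E\ot\mathcal{K}})^{-1}P_{\wV(E\ot\mathcal{K})}$ with $S^\dagger S=I_{E\ot\mathcal{K}}$.

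\textbf{Step 3.} Apply Remark \ref{U1} (3)$\Rightarrow$(1) to $(\sigma,T)|_{\mathcal{K}}$ on the Hilbert space $\mathcal{K}$. This yields, for $x\in\mathcal{K}$,
$$\|S_2^*x\|^2+\|SS^\dagger x\|^2\le 2\|S^*x\|^2,$$
where the adjoints are taken as maps into $E^{\ot 2}\ot\mathcal{K}$ and $E\ot\mathcal{K}$. Now identify the terms. Since $S_2=S(I_E\ot S)=\wV(I_E\ot\wV|_{E\ot\mathcal{K}})$ as a map $E^{\ot2}\ot\mathcal{K}\to\mathcal{H}$, the first term is $\|(\wV(I_E\ot \wV|_{E\ot\mathcal{K}}))^*x\|^2$. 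Since $S S^\dagger x=\wV (\wV|_{E\ot\mathcal{K}})^\dagger x$, the second term matches as well. Finally $S^*x=(\wV|_{E\ot\mathcal{K}})^*x$. This is exactly the claimed inequality.

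\textbf{Main obstacle.} The delicate point is that Remark \ref{U1} is stated for the ambient $\mathcal{H}$, so its proof, through Theorem~4.4 with $Q=S^\dagger S=I$, must be legitimately reapplied to a representation on $\mathcal{K}$. This requires checking that the Moore--Penrose inverse and the adjoints are computed inside $\mathcal{K}$, that is, with $P_{\wV(E\ot\mathcal{K})}$ rather than $P_{R(\wV)}$. In particular I would verify that the $X/XX^*$ adjoint argument in $(3)\Rightarrow(1)$ uses only the identities $S S^\dagger S=S$, $S'S^*=SS^\dagger$ and $S^*S'S^*=S^*$, which hold for any closed-range operator and hence for $S$.
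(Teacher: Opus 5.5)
Your proposal is correct and follows the paper's proof: convert the hypothesis to condition (3) of Remark \ref{U1}, restrict it to $\zeta\in E^{\ot 2}\ot\mathcal{K}$ and $\eta\in E\ot\mathcal{K}$, note that $(\sigma,T)|_{\mathcal{K}}$ is again bounded below, and apply Remark \ref{U1} (3)$\Rightarrow$(1) to the restriction. Your identification of $S_2$, $SS^{\dagger}$ and $S^*$ with the terms in the statement supplies details the paper leaves implicit, and the ``obstacle'' you flag is harmless: Remark \ref{U1} holds for any bounded below covariant representation on any Hilbert space, in particular on $\mathcal{K}$.
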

\begin{proof}
	Suppose $(\sigma,T)$ is bounded below and $\mathcal{K}$ is a $(\sigma,T)$-invariant, then $\wV|_{E\ot \mathcal{K}}$ is bounded below and satisfies Equation $(\ref{DAS4})$ of Remark \ref{U1}. From Remark \ref{U1}, we get the desired relation.  
\end{proof}

From Remark \ref{U1} and Theorem \ref{MT1}, we derive the next consequences.

\begin{corollary}\label{U4}
	Let $(\sigma,T)$ be a c.b.c-representation of $E$ on $\mathcal{H}$ which satisfies any one condition of
	Remark \ref{U1}, then $(\sigma, T)$ admits Wold-type decomposition.
\end{corollary}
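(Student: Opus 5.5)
The plan is to reduce Corollary \ref{U4} to Theorem \ref{MT1}(b). That theorem already gives the Wold-type decomposition, so it is enough to show that any one condition of Remark \ref{U1} implies condition (b) of Theorem \ref{MT1}.

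First, recall that Remark \ref{U1} is stated for a bounded below covariant representation. Under that assumption $\wV^{\dagger}\wV=I_{E\ot\mathcal{H}}$, so the three conditions there are mutually equivalent. In particular, whichever of (1), (2), (3) is assumed, condition (\ref{DAS4}) holds:
\[
\|(I_E\ot \wV)\zeta+ \eta\|^2 \le 2(\|\zeta\|^2+\|\wV \eta\|^2),\quad \zeta \in E^{\ot 2}\ot \mathcal{H},\ \eta \in E\ot \mathcal{H}.
\]
This is verbatim condition (b) of Theorem \ref{MT1}.

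We then apply Theorem \ref{MT1}. It yields a wandering subspace $\mathcal{W}$, here one would take $\mathcal{W}=\mathcal{H}\ominus\wV(E\ot\mathcal{H})$, and a unique decomposition of $\mathcal{H}$ into two reducing subspaces,
\[
\mathcal{H}=\bigvee_{m\ge0}\wV_m(E^{\ot m}\ot\mathcal{W})\oplus\bigcap_{m\ge0}\wV_m(E^{\ot m}\ot\mathcal{H}).
\]
On the second summand the restriction of $(\sigma,T)$ is isometric and co-isometric. This is precisely the Wold-type decomposition claimed.

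The one point needing care is the standing hypothesis. The corollary says only ``c.b.c-representation'', but Remark \ref{U1} presupposes bounded below. Two cases arise.
\begin{itemize}
\item If condition (2) or (3) is the one assumed, we note that (3), i.e.\ (b), itself forces bounded below: this is the implication (i)$\Rightarrow$(ii) of Remark \ref{WWW}. Alternatively, (2) together with Remark \ref{WWW}(iii) gives (b) directly once bounded below is available.
\item If condition (1) is the one assumed, the expression $\wV\wV^{\dagger}$ already presupposes closed range. For this case we read the corollary with the same standing hypothesis as Remark \ref{U1} (bounded below), and then proceed as in the first paragraph.
\end{itemize}
I expect this hypothesis bookkeeping to be the only real obstacle; once it is settled, the argument is just a citation of Theorem \ref{MT1}.
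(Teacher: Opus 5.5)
Your proposal is correct and follows the same route as the paper: the paper obtains Corollary \ref{U4} just by combining Remark \ref{U1} with Theorem \ref{MT1}, since every condition of Remark \ref{U1} is equivalent to its condition (3), which is verbatim condition (b) of Theorem \ref{MT1}. Your hypothesis bookkeeping is a sensible addition, but note that condition (2) alone does not force bounded below ($\wV=0$ satisfies it trivially), so that case, like (1), rests on the standing bounded-below assumption of Remark \ref{U1}, which you in effect already use.
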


\begin{corollary}\label{U5}
	Let $(\sigma,T)$ be a bounded below analytic covariant representation of $E$ on $\mathcal H$ which satisfies any one condition of Remark \ref{U1}. Then the Beurling-type theorem holds for $(\sigma,T).$
\end{corollary}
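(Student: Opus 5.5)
The plan is to apply the Wold-type decomposition of Corollary \ref{U4} to the restriction $(\sigma,T)|_{\mathcal{K}}$ for an arbitrary $(\sigma,T)$-invariant subspace $\mathcal{K}$, and then use analyticity of $(\sigma,T)$ to kill the isometric–co-isometric part.

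\emph{Step 1: restriction to $\mathcal{K}$.} Fix a nonzero $(\sigma,T)$-invariant subspace $\mathcal{K}$. Since $\wV$ is bounded below, $\wV|_{E\ot\mathcal{K}}$ is bounded below with closed range $\wV(E\ot\mathcal{K})\subseteq\mathcal{K}$. Hence $(\sigma,T)|_{\mathcal{K}}$ is a bounded below c.b.c-representation. By hypothesis $(\sigma,T)$ satisfies one, hence all, of the equivalent conditions of Remark \ref{U1}; take condition (\ref{DAS4}):
\[
\|(I_E\ot \wV)\zeta+\eta\|^2\le 2(\|\zeta\|^2+\|\wV\eta\|^2).
\]
This inequality restricts verbatim to $\zeta\in E^{\ot 2}\ot\mathcal{K}$ and $\eta\in E\ot\mathcal{K}$, because $(I_E\ot\wV)\zeta\in E\ot\mathcal{K}$ by invariance. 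This is exactly the argument of Corollary \ref{U3}, so $(\sigma,T)|_{\mathcal{K}}$ satisfies condition (b) of Theorem \ref{MT1}, equivalently condition (\ref{DAS4}) of Remark \ref{U1} on $\mathcal{K}$.

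\emph{Step 2: Wold decomposition on $\mathcal{K}$.} Corollary \ref{U4} (that is, Theorem \ref{MT1}(b)) applied to $(\sigma,T)|_{\mathcal{K}}$ gives
\[
\mathcal{K}=\bigvee_{m\ge0}\wV_m(E^{\ot m}\ot\mathcal{W})\ \oplus\ \bigcap_{m\ge0}\wV_m(E^{\ot m}\ot\mathcal{K}).
\]
I would verify, by checking inside the proof of \cite[Theorem 3.13]{HS19}, that the wandering subspace here is $\mathcal{W}=\mathcal{K}\ominus\wV(E\ot\mathcal{K})$, just as in Shimorin's classical decomposition. If the statement as quoted leaves this implicit, I would argue it directly: the wandering space in the Wold-type decomposition is forced to be $\mathcal{K}\ominus\wV(E\ot\mathcal{K})$, since the second summand is reducing and $\wV$ is onto it by co-isometry.

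\emph{Step 3: analyticity kills the residual part.} We have
\[
\bigcap_{m\ge0}\wV_m(E^{\ot m}\ot\mathcal{K})\subseteq\bigcap_{m\ge0}\wV_m(E^{\ot m}\ot\mathcal{H})=\{0\}
\]
by analyticity of $(\sigma,T)$. So $(\sigma,T)|_{\mathcal{K}}$ is analytic, and $\mathcal{K}=[\mathcal{K}\ominus\wV(E\ot\mathcal{K})]_{\wV}$, which is the Beurling-type theorem.

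The main obstacle is Step 2: confirming that the generating wandering subspace produced by Theorem \ref{MT1} is precisely $\mathcal{K}\ominus\wV(E\ot\mathcal{K})$, and that closedness of the ranges needed to define $\mathcal{K}\ominus\wV(E\ot\mathcal{K})$ holds. Closedness is handled by bounded belowness in Step 1. An alternative route, if Step 2 proves awkward, is to invoke Theorem \ref{DT7} instead. For that one needs condition (4) of Theorem \ref{DT2}, which follows from analyticity together with condition (b) by the $(i)\Rightarrow(ii)$ direction of the last theorem of Section 3, and condition (1), which follows from Remark \ref{WWW}.
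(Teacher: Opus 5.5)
Your proposal is correct and follows essentially the same route as the paper: you restrict condition (3) of Remark~\ref{U1} to $\mathcal{K}$, which is exactly the content of Corollary~\ref{U3}. You then apply the Wold-type decomposition of Corollary~\ref{U4} to $(\sigma,T)|_{\mathcal{K}}$ with $\mathcal{W}=\mathcal{K}\ominus\wV(E\ot\mathcal{K})$, and use that analyticity passes to the restriction, so the residual summand vanishes. Your extra justification that the wandering subspace must be $\mathcal{K}\ominus\wV(E\ot\mathcal{K})$ (via closed range and the reducing, co-isometric residual part) is a detail the paper simply takes for granted.
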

\begin{proof}
	Let $\mathcal{K}$ be a $(\sigma,T)$-invariant, using Corollary \ref{U3}, we have $$\| (\wV(I_E\ot \wV|_{E\ot \mathcal{K}}))^*x\|^2+\|\wV(\wV |_{E\ot \mathcal{K}})^{\dagger}x\|^2\le 2\|(\wV |_{E\ot \mathcal{K}})^*x\|^2, \quad x\in \mathcal{K}.$$ Since $\wV$ is bounded below, $\wV |_{E\ot \mathcal{K}}$ is also bounded below and $(\sigma,T)|_{\mathcal{K}}$ satisfies the conditions in Remark \ref{U1}. Using Corollary \ref{U4}, $(\sigma,T)|_{\mathcal{K}}$ admits Wold-type decomposition. That is, $$\mathcal{K}=\bigvee_{n\geq0}\wV_n(E^{\ot n}\ot \mathcal{W}) \bigoplus \bigcap_{n \geq 1}\wt{T}_n(E^{\otimes n} \otimes \mathcal{K}),$$ where $\mathcal{W}=\mathcal{K}\ominus \wV(E\ot \mathcal{K}).$ If  $(\sigma,T)$ is analytic, then $(\sigma,T)|_{\mathcal{K}}$ is also analytic. Therefore $\mathcal{K}= \bigvee_{n\geq0}\wV_n(E^{\ot n}\ot \mathcal{W}).$
\end{proof}

\section{Applications}

	For $n\in \mathbb{N},$ let $I_n:=\{1,2,\dots,n\}.$ Let $\mathcal{H}$ be a Hilbert space with orthonormal basis $\{e_m :\: m\ge 0\}$ and consider a bounded family of positive numbers $\{w_{i,m} : i \in I_n,\:\:m\ge 0 \}.$  Let $E$ be an $n$-dimensional Hilbert space with the orthonormal basis $\{\delta_i\}_{i\in I_{n}}.$ The unilateral weighted shift c.b.c. representation, $($see \cite{AHS22, DS1, DS10}$)$, $(\rho, S^{w})$ of $E$ on $\mathcal{H}$ is given by
$$  \rho(c)=c I_{\mathcal{H}}\:\:\: \mbox{and}\:\:\: S^{w}(\delta_i)=V_i,$$ where $c \in \mathbb{C}, V_{i}(e_{m})=w_{i,m}e_{nm+i}$ for $i\in I_n$ and $m\ge 0.$ It is easy to see that $R(V_i)=\overline{span} \{e_j : \: j\in B_i:=\{nm+i :\: m\ge 0\}\}, R(V_i^2)=\overline{span} \{e_j : \: j\in C_i:=\{n^2m+ni+i :\: m\ge 0\}\}, R(V_i)\perp R(V_j)$ for distinct $i,j\in I_n$ and  $$V_i^{*}(e_{j})=\begin{cases}
	w_{i,\frac{j-i}{n}}e_{\frac{j-i}{n}} & \text{if }  {j\in B_i}; \\
	0 & \text{if }   {j\notin B_i}.
\end{cases}$$  Let $x\in \mathcal{H},$ then $$(\inf_{m\ge 0} w_{i,m}) \|x\|\le \|V_ix\| \le (\sup_{m\ge 0} w_{i,m}) \|x\|\quad for \quad i\in I_n.$$ We know that $V_i$ is bounded below if and only if $0< \inf_{m\ge 0} w_{i,m}\le \sup_{m\ge 0} w_{i,m}< \infty$ for $i\in I_n.$ By definition of the Moore-Penrose inverse of $V_i,$ $$V_i^{\dagger}(e_{j})=\begin{cases}
\frac{1}{w_{i,\frac{j-i}{n}}}e_{\frac{j-i}{n}} & \text{if }  {j\in B_i}; \\
0 & \text{if }   {j\notin B_i}.
\end{cases}$$

\begin{lemma}
	For $x=\sum_{m=0}^{\infty}a_me_m,$ we get 
	\begin{enumerate}
		\item $\|V_iV_i^{\dagger}x\|^2=\sum_{j\in B_i}|a_j|^2,$
		\item $\|V_i^*x\|^2=\sum_{j\in B_i}|a_j|^2 w_{i,\frac{j-i}{n}}^2,$
		\item $\|V_i^{*2}x\|^2=\sum_{j\in C_i}|a_j|^2 w_{i,\frac{j-i}{n}}^2 w_{i,\frac{j-ni-i}{n^2}}^2.$
	\end{enumerate}
If $(\rho, S^{w})$ satisfies Equation $(1)$ of
Remark \ref{U1} if and only if $$\sum_{j\in C_i}|a_j|^2 w_{i,\frac{j-i}{n}}^2 w_{i,\frac{j-ni-i}{n^2}}^2+\sum_{j\in B_i}|a_j|^2 \le 2 \sum_{j\in B_i}|a_j|^2 w_{i,\frac{j-i}{n}}^2 ~~~~~\mbox{for every}~~~  i\in I_n,$$
it is also equivalent to 
\begin{equation}\label{PGD1}
	\begin{cases} |a_j|^2 (w_{i,\frac{j-i}{n}}^2 w_{i,\frac{j-ni-i}{n^2}}^2 +1-2w_{i,\frac{j-i}{n}}^2 )\le 0 ~~~~~~~~\mbox{for all}~~~ {j\in C_i},i\in I_n\\ |a_j|^2 (1-2w_{i,\frac{j-i}{n}}^2 )\le 0~~~~~~~~\mbox{for all}~~~ {j\in B_i\setminus C_i},i\in I_n \end{cases}.
\end{equation}
\end{lemma}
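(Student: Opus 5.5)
The plan is to compute each of the three norms directly on the orthonormal basis $\{e_m\}$, using the explicit formulas for $V_i^*$ and $V_i^{\dagger}$ displayed above. Then we rewrite condition (1) of Remark \ref{U1} coordinatewise, working one index $i\in I_n$ at a time.

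For (1), take $x=\sum_m a_m e_m$. Then $V_i^{\dagger}x=\sum_{j\in B_i} a_j w_{i,\frac{j-i}{n}}^{-1} e_{\frac{j-i}{n}}$. Applying $V_i$ multiplies the coefficient of $e_{\frac{j-i}{n}}$ by $w_{i,\frac{j-i}{n}}$ and sends this vector back to $e_j$. So $V_iV_i^{\dagger}x=\sum_{j\in B_i}a_je_j$, which is just the projection onto $R(V_i)$, and Parseval gives the formula.

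For (2), $V_i^*x=\sum_{j\in B_i}a_j w_{i,\frac{j-i}{n}} e_{\frac{j-i}{n}}$. The map $j\mapsto \frac{j-i}{n}$ is injective on $B_i$, so Parseval applies.

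For (3), apply $V_i^*$ once more. The vector $e_{\frac{j-i}{n}}$ survives only if $\frac{j-i}{n}\in B_i$, i.e. $j=n(nm+i)+i=n^2m+ni+i$, which means $j\in C_i$. In that case the new index is $\frac{j-ni-i}{n^2}$ and the weight $w_{i,\frac{j-ni-i}{n^2}}$ appears. Injectivity again gives the stated sum.

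For the equivalence, I interpret condition (1) of Remark \ref{U1} for $(\rho,S^w)$ componentwise through the $V_i$. Since the ranges $R(V_i)$ are mutually orthogonal, $\|\wT^*x\|^2=\sum_i\|V_i^*x\|^2$ splits as a sum over $i$, and likewise $\|\wT\wT^{\dagger}x\|^2$ and $\|\wT_2^*x\|^2$ (restricted to the $V_i^2$ pieces, in the paper's reading). Plugging in (1)--(3) gives the first displayed inequality.

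To reach \eqref{PGD1}, note $C_i\subseteq B_i$, since $n^2m+ni+i=n(nm+i)+i$. Split the sum over $B_i$ into $C_i$ and $B_i\setminus C_i$ and move everything to one side. The result is a single sum $\sum_j |a_j|^2\,\gamma_j\le 0$, where $\gamma_j$ equals the bracket in the first case of \eqref{PGD1} on $C_i$ and $1-2w_{i,\frac{j-i}{n}}^2$ on $B_i\setminus C_i$.

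Since this must hold for every $x$, testing with $x=e_j$ gives $\gamma_j\le 0$ for each $j$. Conversely, termwise nonpositivity clearly implies the summed inequality. This yields \eqref{PGD1}.

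The main obstacle is bookkeeping. The cross-index interaction in $\wT_2^*$ (terms $V_iV_k$ with $i\neq k$) has to be reconciled with the per-$i$ formulation, and I would simply follow the paper's per-$i$ convention. The index arithmetic for $C_i$ also needs a careful check.
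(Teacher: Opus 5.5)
You compute (1)--(3) correctly, and your last step is fine: $V_i^2V_i^{*2}$, $V_iV_i^{\dagger}$ and $V_iV_i^*$ are diagonal in $\{e_j\}$, so the per-$i$ inequality for all $x$ is equivalent to \eqref{PGD1} by testing on $x=e_j$.

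The gap is the step you set aside as bookkeeping: passing from condition (1) of Remark~\ref{U1} for $(\rho,S^w)$ to the per-$i$ inequality. Since $\wT_2(\delta_i\otimes\delta_k\otimes h)=V_iV_kh$, one has
\[
\|\wT_2^*x\|^2=\sum_{i,k\in I_n}\|V_k^*V_i^*x\|^2 ,
\]
and the mixed terms do not vanish. If $j=nm+i\in B_i\setminus C_i$ with $m\ge 1$, then $m\in B_k$ for a unique $k\neq i$, and $V_k^*V_i^*e_j=w_{i,m}w_{k,\frac{m-k}{n}}e_{\frac{m-k}{n}}\neq 0$. Keeping only the $V_i^{*2}$ pieces discards nonnegative terms from the left-hand side. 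So it gives only the implication from condition (1) to the per-$i$ inequality: apply (1) to $V_iV_i^{\dagger}x$ and drop the terms $k\neq i$.

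The converse is not bookkeeping; it is false. Take $n=2$, $w_{1,2}=1/\sqrt2$, and all other weights equal to $1$. Then \eqref{PGD1} holds: for $j=4l+3\in C_1$ the bracket is $w_{1,l}^2-1\le 0$, for $j\in C_2$ it is $0$, and every weight satisfies $2w^2\ge 1$. But at $x=e_5$ one gets $\|\wT_2^*e_5\|^2+\|\wT\wT^{\dagger}e_5\|^2=\tfrac12+1>1=2\|\wT^*e_5\|^2$, because $V_2^*V_1^*e_5=w_{1,2}w_{2,0}e_0\neq 0$.

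Your method, done correctly, does yield a characterization, since $\wT_2\wT_2^*$, $\wT\wT^{\dagger}$ and $\wT\wT^*$ are all diagonal in $\{e_j\}$. Condition (1) of Remark~\ref{U1} is equivalent to the following: for all $i\in I_n$ and $m\ge 0$, $w_{i,m}^2w_{k,\frac{m-k}{n}}^2+1\le 2w_{i,m}^2$ whenever $m\in B_k$, together with $1\le 2w_{i,0}^2$. This agrees with the first line of \eqref{PGD1} exactly when $k=i$, that is, when $j=nm+i\in C_i$. For $j\in B_i\setminus C_i$ with $m\ge1$ it involves a cross weight $w_{k,\cdot}$, $k\neq i$, that \eqref{PGD1} omits.

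So you can establish the ``only if'' half of the lemma. The ``if'' half, which is the one the subsequent corollary relies on, needs $\|V_i^{*2}x\|^2$ replaced by $\sum_k\|V_k^*V_i^*x\|^2$. The paper gives no proof of this lemma, and the issue you flagged is exactly where its stated equivalence fails.
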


From Corollary \ref{U4} and \ref{U5}, we derive the next consequences.
\begin{corollary}
	\begin{enumerate}
		\item Let $(\rho, S^{w})$ be a unilateral weighted shift covariant representation of $E$ on $\mathcal{H}$ (defined as above) and it satisfies Equation (\ref{PGD1}), then $(\rho, S^{w})$ admits Wold-type decomposition.
		\item Let $(\rho, S^{w})$ be a unilateral weighted shift covariant representation of $E$ on $\mathcal{H}$ (defined as above) and it satisfies Equation (\ref{PGD1}), then Beurling-type theorem holds for $(\rho, S^{w}).$
	\end{enumerate}
	
\end{corollary}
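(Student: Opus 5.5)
The statement to prove is the final Corollary. Both parts follow by checking that the weighted shift representation $(\rho,S^{w})$ satisfies condition (1) of Remark \ref{U1}, and then quoting Corollaries \ref{U4} and \ref{U5}.

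\textbf{Bounded below.} First I will establish that Equation (\ref{PGD1}) forces $(\rho,S^{w})$ to be bounded below. Take $j\in B_i\setminus C_i$ with $a_j\neq 0$; the second line of (\ref{PGD1}) then gives $w_{i,\frac{j-i}{n}}^2\ge \tfrac12$. For $j\in C_i$, the first line gives
\[
w_{i,\frac{j-i}{n}}^2\bigl(2-w_{i,\frac{j-ni-i}{n^2}}^2\bigr)\ge 1 .
\]
Hence $w_{i,\frac{j-i}{n}}^2\ge \tfrac12$ again, because the bracket is at most $2$. So $\inf_m w_{i,m}\ge 1/\sqrt2>0$. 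Together with $\sup_m w_{i,m}<\infty$ (the weights are a bounded family), this makes each $V_i$ bounded below. Since the ranges $R(V_i)$ are mutually orthogonal,
\[
\|\wt{S^w}(\textstyle\sum_i\delta_i\ot h_i)\|^2=\sum_i\|V_ih_i\|^2 ,
\]
so $\wt{S^w}$ is bounded below as well.

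\textbf{Condition (1) of Remark \ref{U1}.} Next I compute the three quantities in that inequality for $(\rho,S^w)$. Using $E^{\ot 2}\ot\mathcal H\cong\bigoplus_{i,k}\mathcal H$, we have
\[
\|\wt{S^w}^*h\|^2=\sum_i\|V_i^*h\|^2,\qquad \|\wt{S^w}_2^*h\|^2=\sum_{i,k}\|V_k^*V_i^*h\|^2,
\]
and $\wt{S^w}\wt{S^w}^{\dagger}$ is the projection onto $\bigoplus_i R(V_i)$. Plugging in the lemma's formulas then reduces condition (1) to the stated coefficientwise inequalities. That condition (1) holds for every $x$ is equivalent to (\ref{PGD1}), since the inequality is diagonal in the basis $\{e_j\}$ and one can test it on single basis vectors.

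\textbf{Main obstacle.} I expect the bookkeeping of $\wt{S^w}_2^*$ to be the hardest step. Its genuine expression involves the mixed products $V_k^*V_i^*$, while the lemma records only the diagonal terms $V_i^{*2}$. I will follow the lemma's formulation as the paper states it and treat the lemma as already giving the equivalence. If the mixed terms turn out to matter, I would instead index over words $(i,k)$, which would only relabel the sets $C_i$.

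\textbf{Conclusion.}
\begin{enumerate}
\item Once condition (1) of Remark \ref{U1} holds, Corollary \ref{U4} gives the Wold-type decomposition directly. This is part (1).
\item For part (2), Corollary \ref{U5} additionally requires analyticity. The unilateral shift is analytic: $\wt{S^w}_m(E^{\ot m}\ot\mathcal H)$ is spanned by basis vectors $e_j$ with $j\ge n^{m-1}+\dots+1\ge m$, so the intersection over all $m$ is $\{0\}$.
\end{enumerate}
Bounded below was established above, so Corollary \ref{U5} yields the Beurling-type theorem.
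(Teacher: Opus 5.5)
Your route is the paper's route: the weighted-shift lemma converts (\ref{PGD1}) into condition (1) of Remark \ref{U1}, and then Corollaries \ref{U4} and \ref{U5} finish. Your bounded-below and analyticity checks are correct, and they supply hypotheses of Corollary \ref{U5} that the paper leaves implicit. However, the obstacle you flagged and then set aside is a genuine gap, inherited from the paper's lemma. Your fallback remark is also wrong: the mixed terms are not a relabeling of the sets $C_i$.

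Since $\wt{S^w}\wt{S^w}^*$, $\wt{S^w}_2\wt{S^w}_2^*$ and $\wt{S^w}\wt{S^w}^{\dagger}$ are diagonal in $\{e_j\}$, condition (1) of Remark \ref{U1} amounts to testing on each $e_j$. Write $j=n^2m+nk+i$ with $i,k\in I_n$. The only nonzero component of $\wt{S^w}_2^*e_j$ is $V_k^*V_i^*e_j$, which gives the requirement $w_{i,nm+k}^2w_{k,m}^2+1\le 2w_{i,nm+k}^2$; for $j=i$ one also needs $2w_{i,0}^2\ge 1$. When $k\ne i$ these $j$ lie in $B_i\setminus C_i$, where (\ref{PGD1}) only asks for $2w_{i,nm+k}^2\ge 1$, which is strictly weaker.

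Here is a concrete counterexample. Take $n=2$, $w_{1,m}=1$ for all $m$, $w_{2,m}=1$ for $m\neq 1$, and $w_{2,1}=\sqrt{3}/2$. Then (\ref{PGD1}) holds:
\begin{enumerate}
\item its first line reads $0\le 0$ for $i=1$ and $w_{2,m}^2-1\le 0$ for $i=2$;
\item its second line holds because every $w_{i,m}^2\ge 3/4$.
\end{enumerate}
But at $h=e_4$ we get $\|\wt{S^w}^*e_4\|^2=w_{2,1}^2=3/4$, $\|\wt{S^w}_2^*e_4\|^2=w_{2,1}^2w_{1,0}^2=3/4$, and $\|\wt{S^w}\wt{S^w}^{\dagger}e_4\|^2=1$. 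Condition (1) would therefore require $7/4\le 3/2$, which fails. So for $n\ge 2$ you cannot reach Corollaries \ref{U4} or \ref{U5} from (\ref{PGD1}).

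Part (1) can be salvaged, and it needs nothing beyond positivity of the weights. Your analyticity argument kills the intersection summand. Moreover $\mathcal{H}\ominus \wt{S^w}(E\ot\mathcal{H})=\mathbb{C}e_0$. Each $V_{i_1}\cdots V_{i_m}e_0$ is a positive multiple of $e_j$ with $j=i_1+ni_2+\dots+n^{m-1}i_m$, and these $j$ run over all $j\ge 1$ (bijective base-$n$ expansion). Hence $\mathcal{H}=[\mathbb{C}e_0]_{\wt{S^w}}$ directly, which is the Wold-type decomposition.

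For part (2), your argument goes through verbatim in two situations. One is if (\ref{PGD1}) is replaced by the full system $w_{i,nm+k}^2w_{k,m}^2+1\le 2w_{i,nm+k}^2$ for all $i,k\in I_n$ and $m\ge 0$, together with $2w_{i,0}^2\ge 1$; this system still forces $\inf_m w_{i,m}\ge 1/\sqrt{2}$. The other is $n=1$, where $C_{1,1}=C_1$ and (\ref{PGD1}) coincides with that system. Under (\ref{PGD1}) alone with $n\ge 2$, neither your proposal nor the paper gives a proof of part (2).
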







\subsection*{Acknowledgement}
The authors thank the reviewer for carefully reading the manuscript and the editor for suggesting changes in the manuscript. Both authors contributed equally to this work and have read and approved the final version of the manuscript.

\subsection*{Conflict of Interest}
We state that there is no conflict of interest and we have no personal relationships that could have appeared to influence the work reported in this paper.

\subsection*{Data Availability}
Data sharing is not applicable to this article as no datasets were generated or analyzed during the current study.

\subsection*{Funding:}
This research received no external funding.

\end{document}